\newtheorem{theorem}{Theorem}
\newtheorem{proposition}[theorem]{Proposition}
\newtheorem{lemma}[theorem]{Lemma}
\newtheorem{remark}[theorem]{Remark}
\numberwithin{equation}{section}
\numberwithin{theorem}{section} 
\newcommand{\sign}{{\rm sign }}
\newcommand{\rd}{\mathrm{d}}
\newcommand{\RR}{\mathbb{R}}
\newcommand{\R}{\mathbb{R}}
\newcommand{\eps}{\varepsilon}
\newcommand{\dxt}{\dot{\bx}}
\newcommand{\ind}{\mathbf{1}}
\newcommand{\bx}{{\bf x}}
\let\oldhat\hat
\renewcommand{\hat}[1]{\oldhat{\mathbf{#1}}}
\numberwithin{equation}{section}
\begin{document}
\title{Nonlinear stability of chemotactic clustering with discontinuous advection}

\author{Vincent Calvez} \address{Institut Camille Jordan, CNRS UMR 5208, Universit\'e Claude Bernard Lyon 1, Universit\'e de Lyon, 69622 Villeurbanne, France}
\email{vincent.calvez@math.cnrs.fr}

\author{Franca Hoffmann} \address{Computing and Mathematical Sciences, California Institute of Technology, 1200 California Blvd, CA 91125 Pasadena, US.} \email{fkoh@caltech.edu}

\keywords{bacterial chemotaxis, rate of convergence, asymptotic behaviour,
 Keller-Segel, stability, entropy decay}


\begin{abstract}
We perform the nonlinear stability analysis of a chemotaxis model of bacterial self-organization, assuming that bacteria respond sharply to chemical signals. The resulting discontinuous advection speed represents the key challenge for the stability analysis. We follow a perturbative approach, where the shape of the cellular profile is clearly separated from its global motion, allowing us to circumvent the discontinuity issue.
Further, the homogeneity of the problem leads to two conservation laws, which express themselves in differently weighted functional spaces. This discrepancy between the weights represents another key methodological challenge. We derive an improved Poincar\'e inequality that allows to transfer the information encoded in the conservation laws to the appropriately weighted spaces.
As a result, we obtain exponential relaxation to equilibrium with an explicit rate. A numerical investigation illustrates our results.  
\end{abstract}

\maketitle

\section{Introduction}

This work is devoted to the stability analysis of stationary clusters of bacterial cells under the effect of chemotaxis, as reported in the biophysical literature, see {\em e.g.} \cite{mittal_motility_2003}. 
Certain type of bacteria such as \emph{Escherichia coli} are sensible to chemical gradients and can communicate with each other by means of chemoattractants. In the absence of nutrients, the long-time asymptotics of the cell population at the macroscopic level depend on an intricate interplay between diffusive forces and chemotactic aggregation.
A classical model for bacterial motion is the Patlak--Keller--Segel model and variants where the chemotactic fluxes are derived analytically from a mesoscopic description of the dynamics at the individual level and possibly internal pathways \cite{HiOth00, MR2120548, MR2250124}. 
Here, we focus on a minimal model with discontinuous advection speed and, for the sake of simplicity, in one dimension of space (see \cite{saragosti_mathematical_2010} and discussion below). The cell density is denoted by $\rho(t,x)$, and the  chemoattractant concentration by $S(t,x)$. Cluster formation of cells in quasi-equilibrium can be modeled by a version of the classical Patlak-Keller-Segel model with linear diffusion and a discontinuous drift term,
\begin{subequations}
\label{quasimodel}
\begin{align}
& \partial_t \rho(t,x) =  \partial^2_{x} \rho(t,x) -  \chi \, \partial_x \left(\rho(t,x)  \sign(\partial_x S(t,x)) \right)\label{quasimodel1} \,,\\
& - \partial^2_{x} S(t,x) + \alpha S(t,x)  = \rho(t,x)  .\label{quasimodel2}
\end{align}
\end{subequations}
where $\chi>0$ is the chemoattractant sensitivity, and $\alpha\ge 0$ is the natural decay of the chemical.  The system \eqref{quasimodel} is equipped with an initial condition $\rho(0,x) = \rho_0(x)$, whose regularity and decay at infinity will be discussed below.

This model differs from the standard Patlak-Keller-Segel model for which the advection speed is essentially linear with respect to the chemical gradient, $\chi \partial_x S$ , see \cite{blanchet_parabolic-elliptic_2011,bellomo_toward_2015,painter_mathematical_2019} for recent reviews. The discontinuous nonlinearity $\chi \sign( \partial_x S)$ is the signature of  strong amplification of signal variations at the individual level. This is the extreme point of a family of chemotaxis models with non-linear dependency upon the chemical gradient that can be applied to shallow and steep gradients, see \cite{rivero_transport_1989} for the original derivation and \cite{ford_analysis_nodate} for a biological validation. We also refer to  \cite{hillen_users_2008} for a review on mathematical modeling of chemotaxis (see, in particular Model (M7)), and \cite{tindall_overview_2008} for a review dedicated to bacterial collective motion. The sign function was also used to reproduce traveling bands of bacteria with good agreement in  
\cite{saragosti_mathematical_2010}, which is the early motivation for the present work.

This strong discrepancy -- $\chi \sign( \partial_x S)$ versus $\chi \partial_x S$ -- requires a specific approach to handle the stability analysis of stationary states of \eqref{quasimodel}. There exists a family of stationary states to model~\eqref{quasimodel} given by
\begin{equation}\label{eq:statstate1}
\rho_\infty(x)
= \frac{M\chi}{2}e^{-\chi|x-a|}\, , \quad (M,a)\in \R_+^*\times \R\,. 
\end{equation}
The first paramater $M$ is linked to the conservation property of \eqref{quasimodel1}, and is fully determined by the mass of the initial condition, $M = \int \rho_0(x)\, dx$ since the equation for the cell density is conservative. The second parameter, $a$ is linked to the invariance by translation, and is also determined by the initial condition, though in a non-trivial way. It is interesting to notice that the natural length scale $L$ for the typical size of a cell cluster associated to the state \eqref{eq:statstate1} is $L = \chi^{-1}$, independent of the total mass of cells $M$. This is in agreement with the observations reported in \cite{mittal_motility_2003}, where the typical size of the cell cluster varies little with the number of bacteria. This is in opposition with the standard Patlak-Keller-Segel model, for which the length scale is $L = (\chi M)^{-1}$ due to the homogeneity of the problem (twice the number of cells results in twice the quantity of chemoattractant, and this in turn increases twofold the advection speed). 

In the present work, we assume $M = 2/\chi$ without loss of generality (in order to cancel the prefactor  in \eqref{eq:statstate1}). Further, we assume $\alpha>0$ as the case $\alpha=0$ can be treated in a more straighforward way with different methods, see Remark (5) following Theorem~\ref{thm:main}, or Appendix~\ref{sec:alpha0} for more details.

We stress that we are not concerned here with existence, uniqueness and regularity issues for solutions to \eqref{quasimodel}. We assume that solutions are sufficiently regular for our calculations to hold, that is, the cell density is continuous at all times $t\geq 0$, $\rho(t,\cdot) \in \mathcal C^0(\RR)$, while its derivative $\partial_x \rho$ develops jump discontinuities at points where $\partial_x S$ changes sign (see below for more details).

\bigskip

Our goal is to prove local stability for the nonlinear problem \eqref{quasimodel} around the family of stationary states \eqref{eq:statstate1}.
The difficulty here arises from the discontinuity of the advection speed $\chi \sign(\partial_x S)$.  
On the one hand, the choice of the sign function rules out a direct linearization of the non-linear term. On the other hand, \eqref{quasimodel1} is a piecewise linear advection-diffusion equation, up to the knowledge of those points where  $\partial_x S$ changes sign. We base our strategy on the latter observation. More precisely, we will crucially use the following preliminary result: 
\begin{proposition}\label{prop:H1}
If the initial density $\rho_0(x)$ is such that $S_0(x)$ has a unique critical point (which is a global maximum), then so is $S(t,x)$ at all times $t\ge 0$.
\end{proposition}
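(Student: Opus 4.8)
The plan is to characterise the critical points of $S(t,\cdot)$ as the zeros of $w(t,x):=\partial_x S(t,x)$ and to control their number in time. Solving \eqref{quasimodel2} gives $S=G_\alpha*\rho$ with the Yukawa kernel $G_\alpha(z)=\frac{1}{2\sqrt\alpha}e^{-\sqrt\alpha|z|}$, so that
\begin{equation*}
w(t,x)=(G_\alpha'*\rho)(t,x)=\frac12\left(\int_x^\infty e^{-\sqrt\alpha(y-x)}\rho(t,y)\,dy-\int_{-\infty}^x e^{-\sqrt\alpha(x-y)}\rho(t,y)\,dy\right).
\end{equation*}
Since $\rho(t,\cdot)\ge0$ has fixed mass, $w(t,x)>0$ as $x\to-\infty$ and $w(t,x)<0$ as $x\to+\infty$, hence $w(t,\cdot)$ always has an odd number of sign changes; the statement ``$S$ has a unique critical point, which is a global maximum'' is exactly the statement that $w(t,\cdot)$ has a single sign change, from $+$ to $-$. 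The nature of a critical point is read off from $\partial_x w=\partial_x^2 S=\alpha S-\rho$: it is a nondegenerate maximum where $\rho>\alpha S$ and a minimum where $\rho<\alpha S$.

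First I would set up a continuation argument. Let $[0,T^*)$ be the maximal interval on which $S(t,\cdot)$ has a unique, nondegenerate critical point $\xt$. On this interval $\sign(\partial_x S)=\sign(\xt-x)$, so \eqref{quasimodel1} reduces to the linear advection--diffusion equation $\partial_t\rho=\partial_x^2\rho-\chi\,\sign(\xt-x)\,\partial_x\rho$ with a \emph{single} moving interface at $\xt$, and the implicit function theorem (using $\partial_x w(t,\xt)=\alpha S-\rho\neq0$) shows that $\xt$ depends continuously on $t$, whence $T^*>0$. Assume for contradiction that $T^*<\infty$. Since $w(t,\cdot)$ varies continuously in $t$ and its sign-change count is odd and equal to one on $[0,T^*)$, unimodality can only be lost through the birth of a new minimum--maximum pair, that is, through the formation at $t=T^*$ of a degenerate (double) zero $x^\ast$ of $w$ — a point where $w$ becomes tangent to the axis, $w(T^*,x^\ast)=\partial_x w(T^*,x^\ast)=0$.

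The heart of the matter is to rule out this tangency using the dynamics, and this is where the main obstacle lies. Differentiating the elliptic relation \eqref{quasimodel2} in time, using \eqref{quasimodel1}, and inverting $(\alpha-\partial_x^2)$ yields, on $[0,T^*)$, the nonlocal evolution equation
\begin{equation*}
\partial_t w=\partial_x^2 w+\chi\,\rho\,\sign(w)-\chi\alpha\,\Sigma,\qquad \Sigma:=G_\alpha*\big(\rho\,\sign(w)\big).
\end{equation*}
The reaction term $\chi\,\rho\,\sign(w)$ has a \emph{favourable} sign: equal to $+\chi\rho\ge0$ where $w>0$ and $-\chi\rho\le0$ where $w<0$, it repels $w$ from the axis and so opposes the appearance of a new zero. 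If this were the only zeroth-order term, a Sturmian/maximum-principle argument at the first tangency point would give an immediate contradiction, since the tangency forces $\partial_t w(T^*,x^\ast)\le0$ (downward tangency from $w>0$) or $\ge0$ (upward tangency from $w<0$), against the favourable sign of the reaction and of $\partial_x^2 w$.

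The difficulty is the nonlocal term $\Sigma$, which competes with the reaction and is not of the form $c(t,x)\,w$, so Sturm's zero-number theorem does not apply directly. To handle it I would use the explicit kernel identity
\begin{equation*}
S-\Sigma=2\int_{\xt}^{\infty}G_\alpha(\cdot-y)\,\rho(t,y)\,dy\ \ge\ 0,
\end{equation*}
obtained by applying $(\alpha-\partial_x^2)^{-1}$ to $\rho-\rho\,\sign(w)=2\rho\,\ind_{\{x>\xt\}}$, together with the positivity of $\rho$ and a case distinction on whether $x^\ast$ lies to the right or to the left of the peak $\xt$. The subtlety I expect to be the crux is that these two cases are asymmetric: for a nascent critical point on the far side of the peak the identity yields $\Sigma(T^*,x^\ast)\le0$, which closes the maximum-principle argument, whereas on the uphill side one typically has $\Sigma(T^*,x^\ast)>0$, so that a pointwise maximum principle on $w$ alone is insufficient there. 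Excluding this remaining case is the step requiring the most care, and I would address it by a more global argument — either by monotonicity in time of a functional counting the sign changes of $w$, or by exploiting the coupling between $\Sigma$ and $w$ through the above representation together with the concentrating effect of the drift toward $\xt$ — thereby ruling out any new critical point and forcing $T^*=\infty$.
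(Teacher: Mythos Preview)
Your setup and the evolution equation $\partial_t w=\partial_x^2 w+\chi\rho\,\sign(w)-\chi\alpha\Sigma$ are correct, and a first-touching argument is indeed the right idea. But the proposal stops short: you declare one of the two cases open and appeal to an unspecified ``more global'' argument, and your claim that the identity yields $\Sigma(T^*,x^*)\le0$ neither follows from $S-\Sigma\ge0$ nor is it what is actually needed. In fact \emph{both} cases close by the same pointwise mechanism, once you combine two ingredients you already wrote down. First, the tangency condition $\partial_x w(T^*,x^*)=0$ is $\partial_x^2 S(T^*,x^*)=0$, which by \eqref{quasimodel2} gives $\alpha S(T^*,x^*)=\rho(T^*,x^*)$. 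Second, alongside $S-\Sigma=2\int_{\bar x}^\infty G_\alpha(\cdot-y)\rho\,dy>0$ you have the companion $S+\Sigma=2\int_{-\infty}^{\bar x}G_\alpha(\cdot-y)\rho\,dy>0$, both strict since $\rho$ must carry mass on each side of $\bar x$. Together these give $|\alpha\Sigma(T^*,x^*)|<\alpha S(T^*,x^*)=\rho(T^*,x^*)$. On the side $x^*>\bar x$ (upward tangency from $w<0$, reaction value $-\chi\rho$) the equation reads $\partial_t w=\partial_x^2 w-\chi(\rho+\alpha\Sigma)$ with $\rho+\alpha\Sigma>0$ and $\partial_x^2 w\le0$, forcing $\partial_t w<0$; on the side $x^*<\bar x$ it reads $\partial_t w=\partial_x^2 w+\chi(\rho-\alpha\Sigma)$ with $\rho-\alpha\Sigma>0$ and $\partial_x^2 w\ge0$, forcing $\partial_t w>0$. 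Either contradicts the first-touching sign of $\partial_t w$. So the obstruction you flagged is removable, and no ``global'' argument is required.

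The paper takes a genuinely different route that avoids the nonlocal term altogether. It passes to the moving frame $y=x-\bar x(t)$ and restricts to the half-line $y>0$, where $\partial_y S$ solves the elliptic problem \eqref{S-halfspace} with Dirichlet data $\partial_y S(t,0)=0$. Injecting the drift-diffusion equation for $\partial_y\rho$ on $(0,\infty)$ together with the explicit formula $c(t)=\chi+\dot{\bar x}=\bigl(\chi\alpha S(t,0)-\partial_y^3 S(t,0^+)\bigr)/\partial_y^2 S(t,0)$ yields (Lemma~\ref{lem:S'}) a \emph{local} parabolic equation for $\partial_y S$ in which the only term of uncertain sign carries the factor $\partial_y^2 S(t,y)$ and therefore vanishes at any interior maximum; the remaining source $-\chi\alpha e^{-\sqrt\alpha y}S(t,0)$ is strictly negative. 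An $\varepsilon$-shifted first-touching argument then closes immediately. Your full-line approach is more direct but requires the pointwise control of $\Sigma$ described above; the paper's half-line reformulation purchases a clean maximum-principle structure at the cost of deriving the auxiliary equation~\eqref{eqn:S'}.
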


We prove this key result in Section \ref{sec:H1}. In this paper, we shall assume throughout that the condition of Proposition \ref{prop:H1} is fulfilled.  

\begin{remark}
One sufficient condition to ensure that  $S_0(x)$ has a unique critical point is that $\rho_0(x)$ itself has a unique critical point. However, this is not necessary.
\end{remark}

The key to our approach is to separate the question of the shape of the cell density profile from the movement of its center.  
Proposition \ref{prop:H1} enables to define uniquely the point $\bx(t)$ where $\partial_x S(t,x)$ changes sign: 
\begin{equation}\label{defX}
 \begin{cases}
 \sign(\partial_x S(t,x)) = 1\,, & \quad x<\bx(t)\,,\\
 \sign(\partial_x S(t,x)) = -1\,, & \quad x>\bx(t)\,.\\
 \end{cases}
 \end{equation} 
The dynamics of $\bx(t)$ inherit from the condition $\partial_x S(t,\bx(t)) = 0$, that is
\begin{equation*}
\frac{d}{dt}\left[ \partial_x S(t, \bx(t))\right] = 
\partial_{tx} S(t,\bx(t)) + \partial^2_{x} S(t,\bx(t)) \dxt(t) = 0\, .
\end{equation*}
The formula above can be combined with the following representation of $S = K_\alpha*\rho$ for $\alpha>0$, in order to derive a suitable expression for $\dxt$. Here, $K_\alpha$ is the fundamental solution of \eqref{quasimodel2}: 
\begin{equation}\label{eq:S conv}
S(t,x)
= \frac{1}{2\sqrt{\alpha}}
\int_{-\infty}^\infty  e^{- \sqrt{\alpha} |x-z|}  \rho(t,z) \, \rd z\, .
\end{equation}
This formulation enables to derive the following dynamics of $\dxt$ in the moving frame, as a function of the half derivatives of the cell density at the interface:
\begin{equation}\label{eq:dxt}
\dxt(t) = \frac{\partial_x  \rho(t,\bx(t)^+) + \partial_x \rho(t,\bx(t)^-)}{2 \partial^2_{x}  S(t,\bx(t))}\,.
\end{equation}
(see Lemma \ref{lem:xdot} for details).

It is then natural to reformulate \eqref{quasimodel} in the moving frame $y = x - \bx(t)$, writing for  $\tilde\rho(t,y) = \rho(t,x)$ and $\tilde S(t,y) = S(t,x)$:
\begin{subequations}
\label{quasimodelbis}
\begin{align}
& \partial_t \tilde\rho(t,y) =  \partial_{y}\left(\partial_y \tilde\rho(t,y) +(\chi \sign(y) + \dxt(t))\tilde \rho(t,y)\right)\, ,\label{quasimodel21}\quad \dxt(t) = \frac{\partial_y \tilde\rho(t,0^+) + \partial_y \tilde\rho(t,0^-)}{2 \partial^2_{y}  S(t,0)}\,,
 \\
& - \partial^2_{y} \tilde S(t,y) + \alpha \tilde S(t,y)  = \tilde\rho(t,y)  .\label{quasimodel22}
\end{align}
\end{subequations}
In doing so, we focus on the stability analysis of the shape of cell density $\tilde\rho$, and separate this question from the dynamics of the maximum point $\bx$ of $S$.

The stationary state for \eqref{quasimodelbis} is then simply given by
\begin{equation}\label{eq:stat state}
\tilde \rho_\infty(y)
=  e^{-\chi|y|}\, , \quad \dxt = 0\, . 
\end{equation}

We are now in a position to state our main result. Let $H^1_\chi$ be the weighted space of relative energy equipped with the following norm
\begin{equation}\label{eq:defH1}
    \|f\|_{H^1_\chi}
      =\left( \int_{-\infty}^\infty \left|\frac{f(y)}{\tilde\rho_\infty(y)}\right|^2 e^{-\chi|y|}\,\rd y+  \int_{-\infty}^\infty\left|\partial_y\left(\frac{f(y)}{\tilde\rho_\infty(y)}\right)\right|^2 e^{-\chi|y|}\,\rd y\right)^{1/2}
\,.  
\end{equation}

\begin{theorem}\label{thm:main}
Let $M = 2/\chi$ and $\alpha>0$. The family of stationary states    $\{ e^{-\chi|x-a|}\}_{a\in\R}$ to model \eqref{quasimodel} is locally nonlinearly stable in the following sense: There exists $\eps_0>0$ depending on $\chi$ and $\alpha$ such that, for all initial data satisfying 
$$\|\tilde \rho_0 - \tilde \rho_\infty\|_{H^1_\chi}\leq  \eps_0\,,$$
there exists a constant $C>0$, a rate $\gamma>0$ and a limit $\bx_\infty \in \R$ such that
\begin{equation}\label{eq:exp relax}
(\forall t\geq 0)\quad\|\tilde \rho(t,\cdot) - \tilde \rho_\infty\|_{H^1_\chi} \leq C e^{-\gamma t}\, , \quad \lim_{t\to \infty} \bx(t) = \bx_\infty\,,
\end{equation}
where $\bx(t)$ is as defined in \eqref{defX}, the constant $C$  depends on $\tilde \rho_0$,  $\eps_0$, $\chi$ and $\alpha$, and the rate $\gamma$ can be chosen arbitrarily below the following upper bound (at the expense of increasing the prefactor $C$ for larger choices of $\gamma$):
\begin{equation}\label{sigma}
\gamma < \gamma_0 =   \frac{\chi^2}{8} \left ( \frac{\chi + \sqrt{\alpha}}{\frac\chi2 + \sqrt{\alpha}} \right )\,.
\end{equation}
\end{theorem}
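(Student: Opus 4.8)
Reformulate in the relative variable $u = (\tilde\rho - \tilde\rho_\infty)/\tilde\rho_\infty$, so that the energy to be controlled is
$\mE(t) = \int u^2 \tilde\rho_\infty\,\rd y + \int (\partial_y u)^2 \tilde\rho_\infty\,\rd y = \|\tilde\rho-\tilde\rho_\infty\|_{H^1_\chi}^2$,
a weighted $H^1$ norm with reference measure $\tilde\rho_\infty = e^{-\chi|y|}$. The key algebraic observation is that, dropping the $\dxt$ correction, \eqref{quasimodel21} is exactly the weighted Fokker--Planck equation $\partial_t\tilde\rho = \partial_y(\tilde\rho_\infty\,\partial_y u)$, which is the gradient flow of $\mE$ and is self-adjoint in $L^2(\tilde\rho_\infty\,\rd y)$. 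First I would record the two conservation laws as scalar orthogonality constraints on the perturbation $f=\tilde\rho-\tilde\rho_\infty$: mass conservation gives $\int f\,\rd y = \la u,1\ra_{L^2(\tilde\rho_\infty)}=0$, while the defining relation $\partial_y\tilde S(t,0)=0$ for the interface, rewritten through \eqref{eq:S conv}, gives $\int \sign(z)\,e^{-\sqrt\alpha|z|}f\,\rd z = \la u,\sign(z)e^{-\sqrt\alpha|z|}\ra_{L^2(\tilde\rho_\infty)}=0$. These project out precisely the two zero modes of the linearisation: the even mass mode $u\equiv\mathrm{const}$ coming from $\partial_M$, and the odd translation mode $u\propto\sign(y)$ coming from $\partial_a$ of the family $\{e^{-\chi|x-a|}\}$.

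Next I would derive the energy--dissipation identity by differentiating $\mE$ along \eqref{quasimodelbis}. The Fokker--Planck part contributes $-\int(\partial_y u)^2\tilde\rho_\infty$ to the $L^2$ block and an analogous, more delicate nonnegative term to the gradient block, the latter requiring an $H^2$-type computation in which the jump of $\partial_y\tilde\rho$ at $y=0$ produces interface traces that must be tracked and shown to have the right sign or to be controllable. The correction $\dxt\,\partial_y\tilde\rho$ I would split into its linear part $\dxt\,\partial_y\tilde\rho_\infty$, nonlocal because $\dxt$ depends on the half-derivatives $\partial_y\tilde\rho(0^\pm)$, and a genuinely quadratic remainder $\dxt\,\partial_y f$. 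The nonlocal linear term modifies the bare Laplace-measure spectral gap; after evaluating the interface coupling through $\partial_y^2\tilde S(t,0)$, whose value $-\chi/(\chi+\sqrt\alpha)$ at equilibrium is read off from \eqref{eq:S conv}, it produces the factor $(\chi+\sqrt\alpha)/(\tfrac\chi2+\sqrt\alpha)$ appearing in $\gamma_0$.

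The heart of the argument, and the step I expect to be the main obstacle, is an improved Poincar\'e inequality of the form $\mathcal{D}\ge 2\gamma_0\,\mE$ for $u$ satisfying the two constraints above. The difficulty is exactly the weight discrepancy flagged in the abstract: the energy and dissipation live in $L^2(e^{-\chi|y|})$, the mass constraint is unweighted, and the interface constraint carries the foreign weight $e^{-\sqrt\alpha|z|}$ rather than the clean translation weight that would make $u\propto\sign(y)$ exactly orthogonal. I would therefore first establish the sharp half-line Poincar\'e inequality for the exponential measure (gap $\chi^2/4$), decompose $u$ into even and odd parts so that the two scalar constraints act separately, and then prove a transfer lemma showing that orthogonality against $\sign(z)e^{-\sqrt\alpha|z|}$ controls the projection of $u$ onto $\sign(y)$ in $L^2(e^{-\chi|y|})$ with a quantified constant. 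This transfer is where the ratio between $\chi$ and $\sqrt\alpha$ is pinned down and where I expect most of the work to concentrate.

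Finally I would close the estimate nonlinearly. The quadratic remainder, namely the coupling $\dxt\,\partial_y f$ together with the difference between $\dxt$ and its linearisation, is bounded by $C\,\mE^{3/2}$ using that $|\dxt|\lesssim \mE^{1/2}$ from \eqref{eq:dxt}, since its numerator is a trace of $\partial_y u$ controlled by $\mE^{1/2}$ while $\partial_y^2\tilde S(t,0)$ stays bounded away from zero for small perturbations, combined with trace control of $f$ at the interface by $\mE$. Choosing $\eps_0$ small and invoking the improved Poincar\'e inequality yields $\tfrac{d}{dt}\mE\le -(2\gamma_0 - C\mE^{1/2})\,\mE$, so a bootstrap gives $\mE(t)\le \mE(0)e^{-2\gamma t}$ for any $\gamma<\gamma_0$, which is the claimed $H^1_\chi$ decay at the stated rate, with the prefactor $C$ degrading as $\gamma\uparrow\gamma_0$. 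Exponential decay of $\mE$ makes $|\dxt(t)|\lesssim e^{-\gamma t}$ integrable in time, so $\bx(t)=\bx(0)+\int_0^t\dxt(s)\,\rd s$ converges to a finite limit $\bx_\infty$, completing the proof.
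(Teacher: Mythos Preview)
Your overall architecture---energy--dissipation, an improved Poincar\'e inequality to handle the weight discrepancy, nonlinear closure, then integrability of $\dxt$---matches the paper's. But there is a concrete gap in your nonlinear closure and in the $\bx$ argument, and it is precisely the point the paper singles out as the reason for going to $H^2$-level dissipation.

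You claim $|\dxt|\lesssim\mE^{1/2}$ because ``its numerator is a trace of $\partial_y u$ controlled by $\mE^{1/2}$''. It is not: $\dxt$ is proportional to $w(t,0)=\partial_y u(t,0)$, a pointwise value of the \emph{derivative}, and this is not controlled by $\|u\|_{H^1_\chi}$. The paper states this explicitly when introducing $G(t)=\tfrac12\|\partial_y w\|_{L^2_\chi}^2$: the pointwise term $w(t,0)$ ``cannot be controlled in $H^1$ regularity''. What the paper does instead is compute $\dot F=-2G+2\sqrt{\alpha}\,w(0)^2+J$ with $F=\tfrac12\|w\|_{L^2_\chi}^2$, then use the interpolation Lemma~\ref{lem:w0} together with the second conservation law $\langle w\rangle_\lambda=0$ to get $w(0)^2\le G/(\chi+2\sqrt\alpha)$. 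This is where the factor $(\chi+\sqrt\alpha)/(\tfrac\chi2+\sqrt\alpha)$ in $\gamma_0$ actually appears: it is the net coefficient of $-G$ after absorbing the bad interface term, not a modification of the Poincar\'e constant coming from $\partial_y^2\tilde S(t,0)$ as you suggest. The improved Poincar\'e (Proposition~\ref{lem:poincare}) is then $F\le\tfrac{4}{\chi^2}G$ with constant \emph{independent} of $\lambda$, applied to $w$ rather than to $u$; there is no even/odd decomposition or separate transfer lemma. The nonlinear remainder $J$ is not bounded by $C\mE^{3/2}$ but by $C F^{1/2}G$ (roughly; see \eqref{eq:est J}), and is absorbed into the dissipation $-cG$ for small $F$. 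For the same reason, your bound $|\dxt(t)|\lesssim e^{-\gamma t}$ is not available; the paper shows only that $\dxt$ is \emph{integrable}, via $|w(0)|\le C(G+F^{1/3}+F^{1/2})$ with $\int_0^\infty G\,\rd t<\infty$ coming from the dissipation inequality and $F$ decaying exponentially.
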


We make the following observations:
\begin{enumerate}
\item The limit $\bx_\infty$ exists but has no explicit value, up to our knowledge. Its  precise dependence on the initial configuration and parameters of the model is not known. Our analysis is not meant to derive a rate of convergence of $\bx(t) \to \bx_\infty$. 
\item The smallness condition $\|\tilde \rho_0 - \tilde \rho_\infty\|_{H^1_\chi}\leq  \eps_0$ does not control the initial value of $\dxt(0)$ which involves pointwise values $\partial_y\tilde\rho_0(0^\pm)$. Therefore, it is possible that $\bx(t)$ has large variations, meaning that $\bx(0)$ and $\bx_\infty$ are far apart. In fact, the convergence of $\bx(t)$ is obtained by means of the dissipative structure of the parabolic equation \eqref{quasimodel21}. 
\item  The upper bound of the convergence rate $\gamma_0$ in \eqref{sigma} is between $\frac{\chi^2}{8}$ ($\alpha\to\infty$) and $\frac{\chi^2}{4}$ ($\alpha\to 0$). However, our estimates on the convergence rate are not optimal, as resulting from successive inequalities. 
To obtain an optimal bound via functional inequalities, one could combine the improved Poincar\'e inequality \eqref{eq:poincare} with the interpolation inequality \eqref{lem:w0} into a single inequality and seek optimizers. We do not follow this approach here as the two separate inequalities contain meaningful structure.
\item The limit $\alpha\to \infty$ is quite singular, as it is formally equivalent to the following problem (after scaling of $S\mapsto S/\alpha$ and the identification in the vanishing viscosity limit $S = \rho$): 
\[\partial_t \rho(t,x) =  \partial^2_{x} \rho(t,x) -  \chi \, \partial_x \left(\rho(t,x)  \sign(\partial_x \rho(t,x)) \right)\, .\]
However, we have no insight about the above problem. Regarding \eqref{eq:exp relax}, it should be noted that $\gamma$ can be chosen independently of $\alpha$, but the prefactor $C$ becomes singular as $\alpha \to \infty$ in our methodology, due to the  control of non-linear contributions. 
\item The case $\alpha=0$ (excluded in the statement of the theorem) corresponds to settings where the degradation of the chemoattractant $S$ can be ignored. In this case and for more general and slightly smoother signal response functions, one can obtain global $L^1$-stability by reformulating model \eqref{quasimodel} as a scalar conservation law. This follows from an adaptation of the results in \cite[Chapter 7, Section 3]{Serre}, see Appendix~\ref{sec:alpha0}.
\end{enumerate} 

\subsection*{Method.} We emphasize one key methodological contribution. The problem \eqref{quasimodelbis} is equipped with two conservation laws, corresponding to the homogeneity of the problem \eqref{quasimodel1}, and its invariance by translation (see Section \ref{sec:CLs} for more details):
 \begin{subequations}
\label{eq:CS1}
\begin{align}
& \int_{-\infty}^\infty \tilde \rho(t,y)\, dy = \frac{\chi}{2}\, ,\\ 
& \int_{-\infty}^\infty  \partial_y \tilde \rho(t,y) e^{-\sqrt{\alpha}|y|} \, \rd y =0\,. \label{eq:CS1b}
\end{align}
\end{subequations}
Notably, there is a discrepancy between the weights,  here $1$ and $e^{-\sqrt{\alpha}|y|}$, turning into $e^{-\chi|y|}$ and $e^{-(\chi + \sqrt{\alpha})|y|}$ in relative energy, see \eqref{eq:defH1}. As a result, it is not obvious which is the correct choice of functional space to work in. The less restrictive option $e^{-(\chi + \sqrt{\alpha})|y|}$ might be considered. However, this results in a deterioration of the lower bound on the convergence rate in \eqref{sigma}. It is not even clear that all values of $\alpha$ can be encompassed. 
The alternative choice $e^{-\chi  |y|}$ turns out to be much more satisfactory. At the core of our method is an improved version of the standard Poincar\'e inequality with exponential weight, that allows to transfer the information given by the second conservation law to the appropriate weighted space, see Proposition \ref{lem:poincare}.  

\subsection*{Motivation and Perspectives.}
Our initial motivation comes from the mathematical modeling and analysis of concentration waves of chemotactic bacteria. We refer to \cite{tindall_overview_2008} for a comprehensive review of modeling of bacteria colonies. The present work is rooted in \cite{saragosti_mathematical_2010} where the following model was proposed for the propagation of density bands under the conjunct effect of two chemotactic signals (a communication signal $S$ secreted by the cells, and a nutrient signal $N$ depleted by the cells). 
\begin{align}
 &\partial_t \rho = D_\rho \partial^2_{x} \rho - \partial_x \left( \chi_S \rho \sign(\partial_x S) + \chi_N \rho \sign(\partial_x N) \right)\,,\notag \\
 &\partial_t S = D_S \partial^2_{x} S - \alpha S + \rho \,,\label{eq:model1}\\
 &\partial_t N = D_N \partial^2_{x} N - \gamma \rho N\,. \notag
\end{align}
The connection to \eqref{quasimodel} is the following. Firstly, in the absence of a food source, the nutrient signal $N$ is omitted, resulting in a zero speed traveling band, {\em i.e.} a stationary cluster, as reported in \cite{mittal_motility_2003}. Secondly, the communication signal $S$ is assumed to be in quasi-stationary equilibrium, that is \eqref{quasimodel2}. We also consider $D_\rho = D_S = 1$ without loss of generality (after appropriate non-dimensionalization), and we denote $\chi_S = \chi$ the chemosensitivity. 

The agreement with experimental data was shown to be very satisfactory \cite{saragosti_mathematical_2010}. Moreover, the model \eqref{eq:model1} was derived from a kinetic transport model suitable to describe the individual run-and-tumble motion of bacteria. The kinetic model agreed very well with another set of experimental data \cite{saragosti_directional_2011}. 

It is one of the key perspectives of this work to prove stability of the stationary cluster solution of the kinetic-transport equation (whose existence is a particular case of \cite{calvez_chemotactic_2019}). Note that exponential relaxation to equilibrium was established in \cite{calvez_confinement_2015} for the linear problem, where the communication signal $S$ is prescribed, using hypocoercive techniques. This was extended in \cite{mischler_linear_2016} to any dimension of space. 

A second key perspective consists in proving non-linear stability of the traveling concentration waves, solution of \eqref{eq:model1}, see \cite{saragosti_mathematical_2010} and \cite{calvez_chemotactic_2019} for existence of such waves, resp. for the parabolic problem and for the kinetic transport problem.

\section{Separating movement from shape}

From now on, we work in the frame centered at $\bx(t)$: $y = x - \bx(t)$. Also, we drop the symbol $\sim$  for notational convenience.

\subsection{Uniqueness of chemoattractant peak}\label{sec:H1}

In this section, we will show that the unique maximum property for the chemoattractant concentration $S$ is preserved by the flow, and provide a proof for the corresponding main result Proposition \ref{prop:H1}.

To show this, we "decouple" the dynamics to the right of the  maximum from the dynamics to the left. 
This can be done since in the moving frame, $\partial_y S(t,y)$ solves the following equation on the half-space:
\begin{equation}\label{S-halfspace}
    \begin{cases}
    -\partial^2_{y}\left (\partial_y S(t,y)\right )+\alpha \partial_yS(t,y)=\partial_y\rho(t,y)\,, &y>0\,,\\
    \partial_y S(t,0)=0\,,
    \end{cases}
\end{equation}
where the source term $\partial_y\rho(t,y)$ is given via the solution to the following PDE:
\begin{equation}\label{model-yframe-3}
    \partial_t\left ( \partial_y\rho(t,y)\right ) =\partial^2_y\left(\partial_y\rho(t,y)+c(t)\rho(t,y)\right)\,,\quad 
    c(t):=\chi+\dxt(t) \,,\quad 
    y>0 \,.
\end{equation}
We denote by $A$ the fundamental solution of \eqref{S-halfspace}, such that 
\begin{equation}\label{eq:dyS}
\partial_y S(t,y)=\int_0^\infty A(y,z)\partial_z \rho(t,z)\, \rd z\,.
\end{equation}
Namely, we have (although the exact expression can be omitted)
\begin{equation*}
    A(y,z):=
    \begin{cases}
    \frac{1}{\sqrt{\alpha}}\sinh(\sqrt{\alpha}y) e^{-\sqrt{\alpha}z}\,,&y<z\,,\\
    \frac{1}{\sqrt{\alpha}}\sinh(\sqrt{\alpha} z) e^{-\sqrt{\alpha}y}\,,&y>z\,.
    \end{cases}
\end{equation*}

The following reformulation is of key importance to control the sign of $\partial_y S$ on $\R_+$. 
\begin{lemma}\label{lem:S'}
Let $S$ be a solution to \eqref{S-halfspace}. Then $\partial_{y} S$ satisfies the following boundary value problem:
\begin{equation}\label{eqn:S'}
 \begin{cases}    
\displaystyle  \partial_t\left ( \partial_y S(t,y)\right ) 
=\partial^3_{y}S(t,y)-\chi\alpha e^{-\sqrt{\alpha}y}S(t,0)
+\frac{\partial^2_{y}S(t,y) }{\partial^2_{y}S(t,0) }\left( \chi\alpha S(t,0) -\partial^3_{y}S(t,0^+) \right)\,, & y>0\,,\\
\partial_y S(t,0) = 0\, . 
\end{cases}
\end{equation}
\end{lemma}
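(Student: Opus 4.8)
The plan is to start from the representation \eqref{eq:dyS}, differentiate in time, and convert the resulting integral operator back into a local differential expression in $S$; the price is a set of boundary contributions at $y=0$ which encode the coupling with $\dxt$. Write $w(t,y) = \partial_y S(t,y)$, so that $w(t,y) = \int_0^\infty A(y,z)\,\partial_z\rho(t,z)\,\rd z$. Differentiating in $t$ and inserting \eqref{model-yframe-3} gives
\[
\partial_t w(t,y) = \int_0^\infty A(y,z)\,\partial_z^2\big(\partial_z\rho + c(t)\rho\big)(t,z)\,\rd z, \qquad c = \chi + \dxt.
\]
I would integrate by parts twice in $z$. The relevant kernel properties are $A(y,0)=0$, $\partial_z A(y,0^+) = e^{-\sqrt{\alpha}y}$, and, off the diagonal, $\partial_z^2 A = \alpha A$, while $\partial_z A$ has a unit downward jump across $z=y$ (equivalently $(-\partial_z^2+\alpha)A = \delta_{z=y}$). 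The first boundary term vanishes since $A(y,0)=0$; the second produces $e^{-\sqrt{\alpha}y}\,u(t,0)$ with $u := \partial_z\rho + c\rho$; and the diagonal jump produces $-u(t,y)$. Hence
\[
\partial_t w = -u(t,y) + e^{-\sqrt{\alpha}y}\,u(t,0) + \alpha\int_0^\infty A(y,z)\,u(t,z)\,\rd z.
\]

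The next step removes all explicit $\rho$-dependence. Splitting $u$ and using $\int_0^\infty A(y,z)\,\partial_z\rho\,\rd z = w = \partial_y S$, the only new object is $\Phi(t,y) := \int_0^\infty A(y,z)\rho(t,z)\,\rd z$. Since $A$ is the Green's function of $-\partial_y^2+\alpha$ on $\R_+$ with Dirichlet condition at $0$, $\Phi$ solves $-\partial_y^2\Phi + \alpha\Phi = \rho$ with $\Phi(t,0)=0$; comparing with $S$, which solves the same equation but with $\partial_y S(t,0)=0$, the difference is the decaying homogeneous solution, whence $\Phi(t,y) = S(t,y) - S(t,0)e^{-\sqrt{\alpha}y}$. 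Substituting this together with the elliptic identities $\rho = \alpha S - \partial_y^2 S$ and $\partial_y\rho = \alpha\partial_y S - \partial_y^3 S$ (so that $\partial_y^3 S(t,0^+) = -\partial_y\rho(t,0^+)$ thanks to $\partial_y S(t,0)=0$), the terms proportional to $\partial_y S$ and to $S$ cancel, leaving
\[
\partial_t w = \partial_y^3 S + c\,\partial_y^2 S(t,y) - e^{-\sqrt{\alpha}y}\big(\partial_y^3 S(t,0^+) + c\,\partial_y^2 S(t,0)\big).
\]

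It remains to identify $c$ with the coefficient in \eqref{eqn:S'}, i.e. to prove the key identity $c\,\partial_y^2 S(t,0) = \chi\alpha S(t,0) - \partial_y^3 S(t,0^+)$. Granting it, substituting $c = \big(\chi\alpha S(t,0)-\partial_y^3 S(t,0^+)\big)/\partial_y^2 S(t,0)$ into the term $c\,\partial_y^2 S(t,y)$ produces the factor $\partial_y^2 S(t,y)/\partial_y^2 S(t,0)$, while replacing $c\,\partial_y^2 S(t,0)$ by its value collapses the bracketed boundary term to $-\chi\alpha e^{-\sqrt{\alpha}y}S(t,0)$, giving exactly \eqref{eqn:S'}; the boundary condition $\partial_y S(t,0)=0$ is inherited from \eqref{S-halfspace}. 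To prove the identity, rewrite it via $\partial_y^2 S(t,0)=\alpha S(t,0)-\rho(t,0)$ and $\partial_y^3 S(t,0^+)=-\partial_y\rho(t,0^+)$ as $\dxt\,\partial_y^2 S(t,0) = \partial_y\rho(t,0^+)+\chi\rho(t,0)$; inserting the definition \eqref{eq:dxt} of $\dxt$ reduces it to the jump relation $\partial_y\rho(t,0^-)-\partial_y\rho(t,0^+) = 2\chi\rho(t,0)$, which is precisely continuity of the flux $J = \partial_y\rho + (\chi\sign(y)+\dxt)\rho$ across $y=0$, forced by continuity of $\rho$ under the conservative evolution \eqref{quasimodel21}.

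The main obstacle is the bookkeeping in the double integration by parts — correctly accounting for the vanishing of $A(y,0)$, the value of $\partial_z A(y,0^+)$, and the Dirac contribution from the jump of $\partial_z A$ at $z=y$ — together with the Dirichlet-versus-Neumann identification of $\Phi$. The conceptual crux is the key identity: it converts the nonlocal coupling through $\dxt$, which a priori involves both one-sided derivatives $\partial_y\rho(t,0^\pm)$, into a closed equation on the half-line, and it is here that flux continuity does the essential work.
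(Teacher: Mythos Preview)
Your proof is correct and arrives at exactly the same intermediate identity
\[
\partial_t(\partial_y S) = \partial_y^3 S + c\,\partial_y^2 S - e^{-\sqrt{\alpha}y}\big(\partial_y^3 S(t,0^+) + c\,\partial_y^2 S(t,0)\big)
\]
as the paper, followed by the same identification of $c$ via flux continuity. The route to this identity differs in organization: the paper splits $\partial_t(\partial_y S)$ as $S_3 + c\,S_2$ with $S_k = \int_0^\infty A(y,z)\,\partial_z^k\rho\,\rd z$, and identifies each $S_k$ with $\partial_y^k S$ up to a decaying homogeneous correction purely by comparing boundary value problems---no integration by parts and no explicit kernel derivatives are ever used. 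You instead integrate by parts twice against the kernel, which forces you to track $A(y,0)$, $\partial_z A(y,0^+)$, and the jump of $\partial_z A$ across $z=y$, and then invoke the Dirichlet-versus-Neumann comparison only once, for $\Phi = \int A\rho$. Your approach is more hands-on and makes the boundary contributions explicit; the paper's is cleaner in that it sidesteps the kernel bookkeeping entirely by appealing to elliptic uniqueness twice. Both are equally valid and the final step (computing $c$ from flux continuity) is identical.
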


\begin{proof}
By differentiating \eqref{eq:dyS}, we obtain
\begin{equation*}
  \partial_t\left (\partial_y S(t,y)\right ) 
= \int_0^\infty A(y,z)\partial_t\left (\partial_z \rho(t,z)\right )\, \rd z
  = \int_0^\infty A(y,z)\left ( \partial^3_z\rho(t,z)+c(t)\partial^2_z \rho(t,z)\right) \, \rd z\,.
\end{equation*}
By definition of $A$, the first contribution $S_3 := \int_0^\infty A(y,z)  \partial^3_z\rho(t,z)\rd z$ is such that 
\begin{equation*}
    \begin{cases}
    -\partial^2_{y}S_3(t,y)+\alpha S_3(t,y)=\partial^3_y\rho(t,y)\,, &y>0\,,\\
     S_3(t,0)=0\,.
    \end{cases}
\end{equation*}
On the other hand, we have
\begin{equation*}
    -\partial^2_{y}\left ( \partial_y^3S(t,y)\right )+\alpha \partial_y^3S(t,y)=\partial^3_y\rho(t,y)\,, \quad y>0\,.
\end{equation*}
We deduce that $R_3 := S_3 - \partial_y^3S$ satisfies the following boundary value problem:
\begin{equation*}
    \begin{cases}
    -\partial^2_{y}R_3(t,y)+\alpha R_3(t,y)=0\,, &y>0\,,\\
     R_3(t,0)=- \partial_y^3S(t,0^+)\,.
    \end{cases}
\end{equation*}
Its solution is $R_3(t,y) = - \partial_y^3S(t,0^+)e^{-\sqrt{\alpha}y}$ (the exponentially growing mode can be easily ruled out in the energy space \eqref{eq:defH1} by \eqref{S-halfspace}), hence
$$S_3(t,y) = \partial_y^3S(t,y) - \partial_y^3S(t,0^+) e^{-\sqrt{\alpha}y}\,.$$ 
Similarly, we find that the second contribution $S_2 :=  \int_0^\infty A(y,z)  \partial^2_z\rho(t,z)\rd z$ is 
$$S_2(t,y) = \partial_y^2S(t,y) - \partial_y^2S(t,0) e^{-\sqrt{\alpha}y} \,.$$ Thus, 
\begin{align*}
\partial_t\partial_y S(t,y) 
&= S_3(t,y)+c(t) S_2(t,y)\\
&=  \partial_y^3S(t,y) - \partial_y^3S(t,0^+) e^{-\sqrt{\alpha}y}  + c(t) \left ( \partial_y^2S(t,y) - \partial_y^2S(t,0) e^{-\sqrt{\alpha}y} \right )\, .
\end{align*}
The value of $c(t)$ depends on the dynamics of $\dxt$, see \eqref{quasimodel21}.  We develop its expression explicitly, focusing on the right hand side by using the continuity of the flux at the interface: $\partial_y \rho(0^+) + \chi \rho(0) = \partial_y \rho(0^-) - \chi \rho(0)$:
\begin{align*}
    c(t)
    &= \chi+\frac{\partial_y \rho(0^+) + \partial_y \rho(0^-)}{2 \partial^2_{y}S(0)}
    = \chi+\frac{\partial_y\rho(0^+) + \chi \rho(0)}{ \partial^2_{y}  S(0)}\\
&= \frac{\chi \partial^2_{y}  S(0)
-\partial^3_{y}  S(0^+) +\alpha \partial_y S(0) -\chi\partial^2_{y}  S(0)+\chi\alpha S(0)}{ \partial^2_{y}  S(0)}\\
& = \frac{\chi\alpha S(0)-\partial^3_{y}  S(0^+)}{ \partial^2_{y}  S(0)}\,,
\end{align*}
where the second line follows from \eqref{quasimodel22} and \eqref{S-halfspace}.
Substituting this expression into the one above, we obtain the result.
\end{proof}

\begin{proof}[Proof of Proposition~\ref{prop:H1}]
For $\eps>0$, define
$$S^\eps(t,y):=S(t,y)-\eps y \,, \quad \text{ then }\, \partial_y S^\eps(t,y)=\partial_y S(t,y)-\eps\, . $$
In particular, we have $\partial_y S^\eps(t,0) = -\eps <0$ for all $t>0$, by definition, and $\partial_y S^\eps(0,y)\leq - \eps <0$ for all $y>0$, by assumption.
For any fixed $\eps>0$, we aim to prove that $\partial_y S^\eps(t,y)<0$ for all $t,y>0$. Let us suppose by contradiction that there  exists a smallest time $t_0>0$ and closest location $y_0>0$ such that $\partial_yS^\eps(t_0,y_0)=0$. It is necessarily an interior maximum value of $\partial_yS^\eps$ with respect to $y$, so that the following conditions are verified:
\begin{equation*}
\partial^2_{y}S^\eps(t_0,y_0)=0\,, \quad \partial^3_{y}S^\eps(t_0,y_0)\le0\, ,
\end{equation*}
and, in addition, $\partial_t \partial_y S^\eps(t_0,y_0)\ge0$. We deduce from the PDE satisfied by $\partial_y S$ \eqref{eqn:S'} that $S(t_0,0) \leq 0$, which is clearly a contradiction as $S=K_\alpha\ast \rho>0$.

By letting $\eps \to 0$, we conclude that $\partial_y S(t,y) \leq 0$ for all $t,y>0$. 
Then, we can use the PDE \eqref{eqn:S'} again to show that the latter inequality is strict: in fact, it is a drift-diffusion equation on $\partial_y S(t,y)$ with negative source term and non-positive initial data. We just proved that the solution remains non-positive. By application of the strong maximum principle, it cannot have an interior maximum point, so $\partial_yS(t,y)$ is negative for $t,y>0$.  

A symmetric argument to the one presented above shows that $\partial_y S(t,y) > 0$ for $t>0$ and $y<0$, which concludes the proof of Proposition~\ref{prop:H1}. 
\end{proof}

\subsection{Dynamics of the chemoattractant peak}

The dynamics of the point $\bx(t)$ are given by the following lemma. 

\begin{lemma}\label{lem:xdot} We have
\begin{align}\label{xdot-main}
 \dxt(t) = \frac{\partial_y \rho(t,0^+) + \partial_y \rho(t,0^-)}{2 \partial^2_{y}  S(t,0)} .
\end{align}
\end{lemma}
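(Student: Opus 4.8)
The plan is to differentiate the defining condition $\partial_y S(t,0)=0$ in time and solve for $\dot{\bf x}(t)$, using the convolution representation \eqref{eq:S conv} to evaluate the mixed derivative $\partial_{ty}S$ at the interface. First I would recall that in the original (non-moving) frame the peak location $\bx(t)$ is characterized by $\partial_x S(t,\bx(t))=0$. Differentiating this identity along the trajectory gives
\begin{equation*}
\partial_{tx} S(t,\bx(t)) + \partial^2_{x} S(t,\bx(t))\,\dxt(t) = 0\,,
\end{equation*}
so that, provided $\partial^2_x S(t,\bx(t))\neq 0$ (which holds since $\bx(t)$ is a strict maximum of $S$, whence $\partial^2_x S<0$ there, by Proposition \ref{prop:H1}),
\begin{equation*}
\dxt(t) = -\,\frac{\partial_{tx} S(t,\bx(t))}{\partial^2_{x} S(t,\bx(t))}\,.
\end{equation*}
The entire task thus reduces to computing the numerator $\partial_{tx}S$ at the peak.

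To evaluate $\partial_{tx}S$, I would use the representation $S=K_\alpha*\rho$ from \eqref{eq:S conv}, differentiate in $x$ to get $\partial_x S(t,x)=\tfrac12\int \sign(x-z)\,e^{-\sqrt{\alpha}|x-z|}\rho(t,z)\,\rd z$, and then differentiate in $t$, inserting the conservation equation \eqref{quasimodel1} for $\partial_t\rho$. After substituting $\partial_t\rho = \partial^2_z\rho - \chi\partial_z\big(\rho\,\sign(\partial_z S)\big)$ and integrating by parts in $z$, the boundary terms at $z=\bx(t)$ must be handled carefully because $\partial_z\rho$ jumps there while $\rho$ itself is continuous. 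Using \eqref{defX} to split the integral at $z=\bx(t)$ and evaluating at $x=\bx(t)$, the one-sided contributions combine: the smooth kernel $\tfrac12\sign(x-z)e^{-\sqrt{\alpha}|x-z|}$ is continuous away from the peak, and the jump in $\partial_z\rho$ produces precisely the symmetric average $\tfrac12\big(\partial_y\rho(t,0^+)+\partial_y\rho(t,0^-)\big)$ appearing in the claim, while the drift term's advective flux $\chi\rho\,\sign$ contributes terms that cancel by the continuity of the flux at the interface (the relation $\partial_y\rho(0^+)+\chi\rho(0)=\partial_y\rho(0^-)-\chi\rho(0)$ already used in the proof of Lemma \ref{lem:S'}).

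The main obstacle I anticipate is the bookkeeping of boundary terms in the integration by parts, since $\partial_z\rho$ is discontinuous at the moving interface $z=\bx(t)$ and one must keep track of both the kernel's own corner singularity (the $\sign(x-z)$ factor) and the density's jump simultaneously. Once these are organized — splitting into $z<\bx(t)$ and $z>\bx(t)$, integrating by parts on each half, and collecting the interface contributions — the diffusive flux $\partial_z\rho$ yields the numerator and the factor $\partial^2_x S(t,\bx(t))$ emerges naturally in the denominator after identifying $\partial_{tx}S$ with $-\partial^2_x S\cdot\dxt$. A final sign check, using that $\partial^2_x S(t,\bx(t))<0$ at the maximum, confirms the formula \eqref{xdot-main}. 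The passage to the moving-frame variables $y=x-\bx(t)$, under which $\bx(t)\mapsto 0$, then gives the stated form with $\partial_y\rho(t,0^\pm)$ and $\partial^2_y S(t,0)$.
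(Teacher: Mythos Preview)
Your approach is essentially the paper's: differentiate the maximum condition, insert the PDE for $\rho$, integrate by parts against the kernel, and read off the interface contribution. The paper carries this out in the moving frame (so $\dxt$ sits inside the flux $j$ and is solved for at the end), whereas you stay in the fixed frame and use the implicit-function relation $\dxt=-\partial_{tx}S/\partial^2_xS$; the two computations are equivalent line by line.

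One ingredient is missing from your plan, though, and it is exactly the step the paper singles out. After the integration by parts against $\sign(\bx-z)e^{-\sqrt\alpha|\bx-z|}$, you are left not only with the interface term $-J(t,\bx)$ (which, as you correctly say, equals $-\tfrac12\big(\partial_z\rho(\bx^+)+\partial_z\rho(\bx^-)\big)$ by flux continuity) but also with a bulk integral $\tfrac{\sqrt\alpha}{2}\int e^{-\sqrt\alpha|\bx-z|}J(t,z)\,\rd z$. This does \emph{not} cancel by flux continuity. Its diffusive part (one more integration by parts) and its advective part (using $\sign(\partial_zS)=\sign(\bx-z)$) both reduce to multiples of $\int\sign(\bx-z)e^{-\sqrt\alpha|\bx-z|}\rho(t,z)\,\rd z$, which vanishes precisely because $\partial_xS(t,\bx(t))=0$. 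The paper makes this explicit as the identity \eqref{dxS} (``the two middle contributions vanish because of the conservation law''); without invoking it, your computation of $\partial_{tx}S$ does not close to the claimed numerator.
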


\begin{proof} The maximum point $\bx(t)$ is defined implicitly as
\begin{align}
0 &= \partial_y S(t,0)
= \int_{-\infty}^\infty  \partial_y K_\alpha (-y)  \rho(t,y) \, \rd y\notag\\
&=-\frac{1}{2} \int_\RR \sign(-y) e^{- \sqrt{\alpha} |y|} \rho(t,y) \, \rd z.\label{dxS}
\end{align}
This is actually equivalent to the second conservation law \eqref{eq:CS1b} after integration by parts. 
Differentiating with respect to time  and substituting \eqref{quasimodel21}, we obtain
\begin{align*}
0 &= \frac{1}{2} \int_{-\infty}^\infty   \sign\left( y\right)  e^{- \sqrt{\alpha} |y|} \partial_t \rho (t,y) \, \rd y\label{xdot1}\\
& =  \frac{1}{2} \int_{-\infty}^\infty   \sign\left( y\right)  e^{- \sqrt{\alpha} |y|}   \partial_{y}\left(\partial_y \rho(t,y) +(\chi \sign(y) + \dxt(t)) \rho(t,y)\right)\, \rd y \\
& =  j(t,0)  - \frac{\sqrt{\alpha}}{2} \int_{-\infty}^\infty     e^{- \sqrt{\alpha} |y|} j(t,y)\, \rd y\,,
\end{align*}
where the flux 
\begin{equation}\label{flux}
   j(t,y) := - \left(\partial_y \rho(t,y) +(\chi \sign(y) + \dxt(t)) \rho(t,y)\right) 
\end{equation}
is continuous at $y=0$ (the jump in the first derivative compensates exactly the change of sign). 
Simplifying the last term by integration by parts on the first component of $j$, we get
\begin{align*}
0 &= j(t,0)  + \frac{\alpha}{2} \int_{-\infty}^\infty   \sign(y)  e^{- \sqrt{\alpha} |y|} \rho(t,y)\, \rd y
+ \frac{\sqrt{\alpha}}{2} \int_{-\infty}^\infty     e^{- \sqrt{\alpha} |y|} \left (\chi \sign(y) + \dxt(t)) \rho(t,y)\right)\, \rd y\,.
\end{align*}
The two middle contributions  vanish because of the conservation law \eqref{dxS}. In addition, by continuity we have 
$$j(t,0) = \frac12\left ( j(t,0^+) + j(t,0^-)\right ) = - \frac12\left (  \partial_y \rho(t,0^+) + \partial_y \rho(t,0^-)\right  )-  \dxt(t) \rho(t,0)\,.$$ 
Therefore, by the representation $S = K_\alpha*\rho$ \eqref{eq:S conv},
\begin{equation*}
0 = \dxt(t) (-\rho(t,0) + \alpha S(t,0)) - \frac12\left (  \partial_y \rho(t,0^+) + \partial_y \rho(t,0^-)\right  )\, ,
\end{equation*}
which is equivalent to \eqref{xdot-main} using \eqref{quasimodel22}.
\end{proof}

\subsection{Reformulation of the problem and conservation laws}\label{sec:CLs}

In fact, using the continuity of the flux $j$ defined in \eqref{flux} at $y=0$, we can reformulate the dynamics of $\bx(t)$ as follows,
\begin{align}\label{eq:xdot3}
\dxt(t)
= \frac{\partial_y\left(e^{\chi|y|}\rho\right)(t,0)}{\partial^2_{y} S(t,0)}\,.
\end{align}
Substituting this expression into \eqref{quasimodelbis}, we find that the dynamics of the cell density $\rho$ are governed by
\begin{align}
 \partial_t \rho(t,y) =  &\partial_{y}\left(e^{-\chi|y|}\partial_y \left(e^{\chi|y|}\rho(t,y)\right) + \left.\left(\frac{\partial_y\left(e^{\chi|y|}\rho(t,y)\right)}{\partial^2_{y}S(t,y)}\right)\right|_{y=0}\rho(t,y) \right)\,.\label{model-yframe-2}
\end{align}

This formulation strongly suggest to work with relative densities: 
\begin{equation}
\label{eq:relative dens}
u(t,y)
:=\frac{\rho(t,y)}{\rho_\infty(y)}
=  e^{\chi|y|}\rho(t,y)\,.
\end{equation}
Then, $u$ satisfies the system
\begin{subequations}
 \label{nonlineq:alphachi}
 \begin{align}
&\partial_t u(t,y) = e^{\chi|y|} \partial_y \left(e^{-\chi|y|} \left(\partial_y u(t,y) + \frac{\partial_y u(t,0)}{ \partial^2_{y} S(t,0)}\, u(t,y)
\right) \right)\, ,\label{nonlineq:alphachi:1}\\
& -   \partial^2_{y}S(t,0)=   u(t,0) -  \frac{\sqrt{\alpha}}{2} \int_{-\infty}^\infty u(t,y) e^{-(\chi+\sqrt{\alpha})|y|} \, \rd y\, .\label{nonlineq:alphachi:3}
\end{align}
\end{subequations}
Note that $\partial_y u$ is continuous at $y=0$, since the $\mathcal C^1$ discontinuity of $\rho$ has been exactly compensated in \eqref{eq:relative dens}. 

From now on, we shall work with \eqref{nonlineq:alphachi}. 
This system admits the stationary state $u_\infty(y)\equiv 1$, and two conservation laws:
\begin{itemize}
\item conservation of mass:
$$
(\forall t\geq 0)\quad \int_{-\infty}^\infty u(t,y) e^{-\chi|y|} \, \rd y = \frac2\chi\,,
$$
(recall that the mass is fixed to $M=2/\chi$), and
\item centering frame:
$$
(\forall t\geq 0)\quad \int_{-\infty}^\infty u(t,y) \sign(y) e^{-(\chi+\sqrt{\alpha})|y|} \, \rd y = 0 
$$
(linked to the invariance by translation in the original problem).
\end{itemize}
In other words, the two conservation laws are given by the following weight functions
$$
\varphi_1(y) = e^{-\chi|y|}, \quad \varphi_2(y) = \sign(y) e^{-(\chi+\sqrt{\alpha})|y|}\,,
$$
such that
$$
\frac{\rd}{\rd t} \int \varphi_i(y) u(t,y)\, \rd y=0\,,\qquad i=1,2\,.
$$
Setting $\lambda:= \chi+\sqrt{\alpha}$ and using the notation $\langle\cdot\rangle_r$ for the weighted average,
$$
\langle  f \rangle_r: = \frac{r}{2} \int  f(y) e^{-r |y|} \, \rd y\, , \qquad f:\R\to\R\,,\quad r>0\,,
$$
the two conservation laws can be written equivalently as
\begin{equation*}
    \langle  u(t) \rangle_\chi= 1\,,\qquad \langle  \partial_y u (t) \rangle_\lambda =0\,, 
    \qquad \text{ for all } \, t\ge 0\,.
\end{equation*}

\section{Energy estimates}

In this section, we derive $H^1$ energy estimates to measure dissipation in \eqref{nonlineq:alphachi}. Interestingly, we find that dissipation always occurs, as the   non-local interaction contribution is overwhelmed by heat dissipation. Our analysis relies on the following Poincar\'e-type inequality which is designed to handle the discrepancy in the exponential rates of the pair of conservation laws (resp. $\chi$ and $\lambda = \chi + \sqrt{\alpha}$). 

\subsection{Improved Poincar\'e inequality}\label{sec:poincare}

\begin{proposition}[Poincar\'e inequality with unusual normalization]\label{lem:poincare}
Assume $\lambda \geq \chi$, then for any $w\in L^1_\lambda$ such that $w' \in L^2_\chi$,
\begin{equation}\label{eq:poincare}
\int_{-\infty}^\infty |w(y) - \langle w \rangle_\lambda|^2 e^{-\chi |y|}\,\rd y \leq \frac 4{\chi^2} \int_{-\infty}^\infty | w'(y)|^2 e^{-\chi|y|}\,\rd y\, .
\end{equation}
Moreover, the constant $\frac 4{\chi^2}$ is optimal.
\end{proposition}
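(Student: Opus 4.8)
The plan is to prove the sharp bound directly, \emph{without} reducing it to the classical weighted Poincar\'e inequality. Such a reduction is in fact impossible: since $\langle w\rangle_\chi$ is the $L^2(e^{-\chi|y|}\,\rd y)$-projection of $w$ onto the constants, replacing it by $\langle w\rangle_\lambda$ can only \emph{enlarge} the left-hand side of \eqref{eq:poincare}, so the statement is strictly stronger than the standard inequality (which already carries the sharp constant $4/\chi^2$, approached but not attained). The whole point is thus to recover the same constant while centering by the ``wrong'' average $\langle\cdot\rangle_\lambda$. Writing $\rd\mu=e^{-\chi|y|}\,\rd y$ and $v:=w-\langle w\rangle_\lambda$ (so that $v'=w'$ and, crucially, $\langle v\rangle_\lambda=0$ by construction), the backbone is the completing-the-square identity produced by the integrating factor $\tfrac\chi2\sign(y)$:
\[
\int_{-\infty}^\infty\Big|v'-\tfrac\chi2\sign(y)\,v\Big|^2\,\rd\mu
=\int_{-\infty}^\infty|v'|^2\,\rd\mu-\frac{\chi^2}{4}\int_{-\infty}^\infty v^2\,\rd\mu+\chi\,v(0)^2 .
\]
It follows from integrating $\sign(y)\,(v^2)'$ by parts against $e^{-\chi|y|}$: the jump of $\sign$ at the origin contributes the Dirac mass responsible for the term $\chi v(0)^2$, while the terms at $\pm\infty$ vanish for admissible $w$ (under the stated integrability, after a routine truncation argument).

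By this identity, the inequality $\frac{\chi^2}4\int v^2\,\rd\mu\le\int|v'|^2\,\rd\mu$ --- which is exactly \eqref{eq:poincare} --- is \emph{equivalent} to the boundary estimate
\[
\chi\,v(0)^2\le\int_{-\infty}^\infty\Big|v'-\tfrac\chi2\sign(y)\,v\Big|^2\,\rd\mu .
\]
I would establish this after splitting $v$ into its even and odd parts, which decouples $\int v^2\,\rd\mu$, $\int|v'|^2\,\rd\mu$ and the identity above (all cross terms vanish by parity). For the odd part one has $v(0)=0$, so the boundary term is absent and the estimate is trivial; only the even part requires work. There the centering constraint becomes $\int_0^\infty v\,e^{-\lambda y}\,\rd y=0$, and this is precisely the ingredient that controls the pointwise value $v(0)$. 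Setting $G:=v'-\tfrac\chi2 v$ on $(0,\infty)$ and solving this linear ODE with the decay of $v$ at $+\infty$ gives $v(y)=e^{\chi y/2}\big(v(0)+\int_0^y e^{-\chi s/2}G\,\rd s\big)$; inserting this into the constraint and applying Fubini collapses it to the favourable representation
\[
v(0)=-\int_0^\infty e^{-\lambda s}\,G(s)\,\rd s .
\]
A Cauchy--Schwarz estimate against $e^{-\chi s}$ then yields $v(0)^2\le\frac{1}{2\lambda-\chi}\int_0^\infty G^2 e^{-\chi s}\,\rd s$; since $\int|G|^2\,\rd\mu=2\int_0^\infty G^2 e^{-\chi s}\,\rd s$ by evenness, the boundary estimate holds as soon as $\frac{\chi}{2(2\lambda-\chi)}\le1$, i.e. $\lambda\ge\tfrac34\chi$, which is comfortably implied by the hypothesis $\lambda\ge\chi$.

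The step I expect to be the crux --- and the conceptual heart of the statement --- is the use of the $\langle\cdot\rangle_\lambda$-centering to represent $v(0)$ with weight $e^{-\lambda s}$ rather than the borderline weight $e^{-\chi s/2}$. Indeed, the unconstrained representation $v(0)=-\int_0^\infty e^{-\chi s/2}G\,\rd s$ pairs against $e^{-\chi s}$ through the non-integrable factor $e^{\chi s}\!\cdot e^{-\chi s}\equiv1$, so the naive Cauchy--Schwarz diverges --- a manifestation of the sharp constant not being attained. The $\lambda$-weighted representation, by contrast, is square-integrable precisely because $\lambda>\chi/2$, and this is exactly the mechanism by which the unusual normalization ``regularizes'' the boundary term. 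Finally, optimality of $4/\chi^2$ would be checked on the family $w_\beta(y)=e^{\beta|y|}$ with $\beta\uparrow\chi/2$: the correction to the Rayleigh quotient coming from $\langle\cdot\rangle_\lambda\neq\langle\cdot\rangle_\chi$ carries a factor proportional to $\chi-2\beta$ that vanishes in the limit, so the quotient tends to $4/\chi^2$, matching the (non-attained) extremal regime of the standard inequality.
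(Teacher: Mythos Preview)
Your argument is correct, and it follows a genuinely different route from the paper's. The paper represents the left-hand side of \eqref{eq:poincare} as a bilinear form in $w'$ with an explicit symmetric kernel $\Omega_{\lambda,\chi}$ (Lemma~\ref{lem:Omega}), proves $\Omega_{\lambda,\chi}\ge 0$, and then applies a Schur-type bound after a polarization trick, reducing everything to the pointwise estimate $e^{\chi|x|/2}\int \Omega_{\lambda,\chi}(x,y)e^{\chi|y|/2}\,\rd y\le 2/\chi$; the key observation is that this integral is \emph{independent of $\lambda$}, so the $\lambda=\chi$ case suffices. Your approach instead exploits the ODE structure directly: the completing-the-square identity with integrating factor $\tfrac\chi2\sign(y)$ reduces \eqref{eq:poincare} to controlling the single boundary value $v(0)$, the even/odd split isolates the only nontrivial case, and the $\langle\cdot\rangle_\lambda=0$ constraint furnishes the representation $v(0)=-\int_0^\infty e^{-\lambda s}G(s)\,\rd s$, which is exactly integrable enough for Cauchy--Schwarz. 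Your route is more elementary (no kernel calculus), conceptually transparent about \emph{why} the unusual centering works (it produces a strictly subcritical weight $e^{-\lambda s}$ in the representation of $v(0)$), and incidentally yields the inequality in the larger range $\lambda\ge\tfrac34\chi$. The paper's route, by contrast, makes the $\lambda$-independence of the optimal constant manifest at the level of the kernel and provides an explicit integral representation that may be of independent interest. Your optimality argument via $w_\beta=e^{\beta|y|}$, $\beta\uparrow\chi/2$, is also correct and matches in spirit the paper's spectral discussion around \eqref{eq:W}.
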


Inequality \eqref{eq:poincare} is the standard Poincar\'e  inequality with  exponential weight when $\lambda = \chi$. In that case, the optimal constant $\frac4{\chi^2}$ can be deduced from spectral analysis of the linear operator $-w'' + \chi(\sign y) w' $ in the Hilbert space $L^2_\chi = L^2  (e^{-\chi|y|}  )$ which admits a spectral gap $(0,\frac {\chi^2}4)$, with an isolated value $0$, and essential spectrum beyond $\frac {\chi^2}4$.

Inequality \eqref{eq:poincare} is an improvement of the standard Poincar\'e inequality ($\lambda = \chi$), simply because 
\begin{equation}\label{eq:classical poincare}
\int_{-\infty}^\infty |w(y) - \langle w \rangle_\chi|^2 e^{-\chi |y|}\,\rd y  = \inf_{m\in \R} \int_{-\infty}^\infty |w(y) - m|^2 e^{-\chi |y|}\,\rd y\, .
\end{equation}
It may appear surprising at first glance that the optimal constant does not depend on the exponent $\lambda \geq \chi$. This is a consequence of the fact that the optimal constant is never reached in \eqref{eq:poincare} because the exponential weight has critical decay, and the putative optimal functions are not in $L^2_\chi$. This yields some room for improving  \eqref{eq:classical poincare} as in \eqref{eq:poincare}.

We are not aware of the occurrence of \eqref{eq:poincare} in the literature. There are some examples of weighted Poincar\'e inequalities with non standard averaging, see for instance \cite[Appendix A.2]{bouin_hypocoercivity_2019}. We can also report the extremal case $\lambda = +\infty$ which coincides with the following Hardy inequality:
\begin{equation}\label{eq:hardy}
\int_{-\infty}^\infty |w(y) - w(0)|^2 e^{-\chi |y|}\,\rd y \leq \frac 4{\chi^2} \int_{-\infty}^\infty |w'(y)|^2 e^{-\chi|y|}\,\rd y\, .
\end{equation}
The link between the latter and the classical Poincar\'e inequality with exponential weights is pointed out in \cite[Corollary 1.3]{miclo_quand_2008}, see also \cite{bobkov_exponential_1999}. Inequality \eqref{eq:poincare} can be viewed as a continuous deformation connecting \eqref{eq:classical poincare} and \eqref{eq:hardy} with a {\em constant} optimal constant. 

\begin{remark}
We recall below the equivalence between \eqref{eq:hardy} and the classical Hardy inequality. We restrict to the half-line $\{y>0\}$ and to $\chi =1$ for the sake of clarity. We make the change of variable $x = k e^y$ on both sides: 
\begin{equation}\label{eq:hardy2}
\int_{k}^\infty |\tilde w(x) - \tilde w(k)|^2 \,\frac{\rd x}{ x^2} \leq 4  \int_{k}^\infty |\tilde w'(x)|^2  \, \rd x\, .
\end{equation}
Letting $k\to 0$ we recover the classical Hardy inequality on the half-line. 
 
\end{remark}

It is noticeable that the optimal constant in \eqref{eq:poincare} does not depend on $\lambda$, provided it is strictly greater than $\chi$. For the proof of Proposition~\ref{lem:poincare} we present two arguments: first some insights based on spectral analysis (not complete enough to provide a rigorous proof), followed by a complete (technical) proof of Proposition~\ref{lem:poincare} based on the reformulation of \eqref{eq:poincare} as a quadratic form. Note that we can always restrict our arguments to $\chi = 1$ without loss of generality by rescaling $y$ to $\chi y$.\\

We begin with an analysis of the spectrum for the operator associated to inequality \eqref{eq:poincare}.
Studying carefully the critical functions $w$ of \eqref{eq:poincare} (such that (i) $\langle w \rangle_\lambda = 0$ and (ii) $\frac12\int w^2e^{-|y|}\, dy = 1$), we find that they must satisfy the following problem:
\begin{equation*}
-w'' +  (\sign y) w' - \mu w = \nu e^{(1 - \lambda)|y|}\, .
\end{equation*}
Here, the scalars $\mu$ and $\nu$ are Lagrange multipliers corresponding to the two constraints (i) and (ii).
Actually, this problem is equivalent to the next one after the change of unknown $w(y) = W(y) e^{|y|/2}$:
\begin{equation}\label{eq:W}
-W'' - \delta_0 W - \left ( \mu - \frac14 \right ) W = \nu e^{(1/2- \lambda)|y|} \,.
\end{equation}
This problem can be explicitly solved after tedious computations by means of the fundamental solution (details not shown). On the one hand, the constant $\nu$ must be adjusted to ensure $\|w\|_{L^2_1} = 1$.  On the other hand, the condition $\langle w \rangle_\lambda = 0$ cannot be satisfied if $\mu < \frac{1}4$. When $\mu =  \frac{1}4$, we have the following expression (general solution of the homogeneous problem augmented by a particular solution):
\begin{equation}
w =  C \left ( \frac12 |y| - 1 \right )  e^{|y|/2} 
- \frac{\nu}{\left (\lambda - \frac12\right )^2} \left (  
   (\lambda - 1)     |y|e^{|y|/2}    +   e^{(1 - \lambda)|y|}\right )\, ,
\end{equation} 
where  the constant $C$ can be adjusted to ensure $\langle w \rangle_\lambda = 0$ when $\lambda >1$.
  
However, one immediately notices that $w\notin L^2_1$, simply because $\frac{1}4$ belongs to the essential spectrum of $-w'' + (\sign y) w' $. Nonetheless, considering the quotient 
\begin{equation}
\dfrac{\int_{-R}^R | w'(y)|^2 e^{-|y|}\,\rd y}{\int_{-R}^R |w(y)|^2 e^{- |y|}\,\rd y}
\end{equation}
for arbitrary large values of $R$, we see that the additional contribution of the particular solution $\mathcal O\left (e^{(1 - \lambda)|y|}\right )$ becomes negligible, so that the quotient converges to the constant $\frac{1}{4}$ as   $R\to +\infty$, independently of $\lambda$. This is the reason why the optimal constant in \eqref{eq:poincare} does not depend on $\lambda$ in the case $\lambda>1$.\\

Next, we provide a complete proof of Proposition~\ref{lem:poincare} by direct computations. It relies on the following technical lemma, whose proof is postponed to
Appendix~\ref{sec:poincareproof}.

\begin{lemma}\label{lem:Omega}
For $\lambda\ge \chi>0$, define $\Omega_{\lambda,\chi}:\R\times\R\to\R$ by 
\begin{equation}\label{def:Omega}
\Omega_{\lambda,\chi}(x,y) =
\begin{cases}
\left(M_\lambda(x) - M_\chi(x)\right)\left ( M_\lambda(y)  - M_\chi(y)\right ) + (1 - M_\chi(y))  M_\chi(x) & \quad \text{if $x\le y$}\,,\\
\left ( M_\lambda(x) - M_\chi(x)\right )\left ( M_\lambda(y)  - M_\chi(y)\right ) + (1 - M_\chi(x))  M_\chi(y) & \quad \text{if $x>y$}\,,
\end{cases}
\end{equation}
where $M_\lambda$ denotes the cumulative density function, $$M_\lambda(x) = \int_{z < x} \frac\lambda2 e^{-\lambda |z|} \,\rd z \,.$$
Then $\Omega_{\lambda,\chi}$ is non-negative and symmetric,
$$
\Omega_{\lambda,\chi}\ge 0\,,\qquad \Omega_{\lambda,\chi}(x,y)=\Omega_{\lambda,\chi}(y,x)\,,
$$
and we can rewrite the left-hand side of the Poincar\'e inequality \eqref{eq:poincare} as
\begin{equation*}
\frac12\int_{-\infty}^\infty |w(y) - \langle w \rangle_\lambda|^2 e^{- \chi |y|}\,\rd y 
 =   \iint   w'(x_1)   w'(x_2) \Omega_{\lambda,\chi}(x_1,x_2 )   \,\rd x_1 \,\rd x_2   \,.
\end{equation*}
\end{lemma}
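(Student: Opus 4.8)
The plan is to reduce the quadratic form on the left of \eqref{eq:poincare} to a double integral against $w'$ by a single application of the fundamental theorem of calculus, and then to read off both the explicit formula \eqref{def:Omega} and the non-negativity from the resulting kernel. Throughout I would use that $\langle\,\cdot\,\rangle_\lambda$ is the expectation against the probability measure $\rd\mu_\lambda=\tfrac\lambda2 e^{-\lambda|y|}\,\rd y$, whose cumulative distribution function is precisely $M_\lambda$, and (following the remark preceding the lemma) I would normalise $\chi=1$ to avoid carrying the factor $\tfrac1\chi$ that the unweighted integrals $\rd x_1\,\rd x_2$ against the weight $e^{-\chi|y|}$ otherwise produce.

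First I would establish the representation
\begin{equation*}
w(y)-\langle w\rangle_\lambda=\int_{-\infty}^\infty \big(w(y)-w(z)\big)\,\rd\mu_\lambda(z)=\int_{-\infty}^\infty w'(s)\,K_\lambda(y,s)\,\rd s,\qquad K_\lambda(y,s)=M_\lambda(s)-\mathbf{1}_{\{s>y\}}.
\end{equation*}
This comes from writing $w(y)-w(z)=\int w'(s)\,(\mathbf{1}_{\{z<s<y\}}-\mathbf{1}_{\{y<s<z\}})\,\rd s$ and interchanging orders, the $z$-integral of the indicators yielding $M_\lambda(s)$ for $s<y$ and $M_\lambda(s)-1$ for $s>y$. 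No boundary terms at infinity arise, since the inner integral runs over the finite range between $z$ and $y$; the Fubini step is legitimate because $w\in L^1_\lambda$ while $|K_\lambda(y,\cdot)|\lesssim e^{-\lambda|\cdot|}$ pairs with $w'\in L^2_\chi$ through Cauchy--Schwarz (using $2\lambda>\chi$). Squaring, multiplying by $e^{-\chi|y|}$, integrating in $y$ and interchanging orders once more gives
\begin{equation*}
\tfrac12\int |w(y)-\langle w\rangle_\lambda|^2 e^{-\chi|y|}\,\rd y=\iint w'(x_1)\,w'(x_2)\,\Omega_{\lambda,\chi}(x_1,x_2)\,\rd x_1\,\rd x_2,\quad \Omega_{\lambda,\chi}(x_1,x_2)=\tfrac12\int K_\lambda(y,x_1)K_\lambda(y,x_2)e^{-\chi|y|}\,\rd y.
\end{equation*}

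Next I would compute this kernel explicitly using $\int \mathbf{1}_{\{y<x\}}e^{-\chi|y|}\,\rd y=\tfrac2\chi M_\chi(x)$, $\int \mathbf{1}_{\{y<x_1\}}\mathbf{1}_{\{y<x_2\}}e^{-\chi|y|}\,\rd y=\tfrac2\chi M_\chi(\min(x_1,x_2))$ and $\int e^{-\chi|y|}\,\rd y=\tfrac2\chi$. Expanding the product $K_\lambda(y,x_1)K_\lambda(y,x_2)$ and collecting terms yields, after the normalisation $\chi=1$,
\begin{equation*}
\Omega_{\lambda,\chi}(x_1,x_2)=M_\lambda(x_1)M_\lambda(x_2)-M_\lambda(x_1)M_\chi(x_2)-M_\lambda(x_2)M_\chi(x_1)+M_\chi(\min(x_1,x_2)),
\end{equation*}
which I would rearrange algebraically into the two-case form \eqref{def:Omega}; symmetry is then immediate, either from this expression or directly from the manifestly symmetric integral definition of $\Omega_{\lambda,\chi}$.

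The hard part will be the pointwise non-negativity, which does \emph{not} follow from the Gram-type representation above (that only gives positive-definiteness of $\Omega_{\lambda,\chi}$ as a kernel). I would extract it from \eqref{def:Omega} by setting $D(x):=M_\lambda(x)-M_\chi(x)$ and noting, from $M_\lambda(x)=\tfrac12 e^{\lambda x}$ for $x\le0$ and $1-\tfrac12 e^{-\lambda x}$ for $x\ge0$, that $D(x)$ has the sign of $x$. When $x_1,x_2$ share a sign, the cross term $D(x_1)D(x_2)\ge0$ and the remaining summand $(1-M_\chi(\max))M_\chi(\min)\ge0$, so $\Omega_{\lambda,\chi}\ge0$. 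The only genuine case is opposite signs, say $m:=\min\le0\le\max=:M$, where I must show $(1-M_\chi(M))M_\chi(m)\ge -D(m)D(M)$; written out this is $e^{m-M}\ge (e^{m}-e^{\lambda m})(e^{-M}-e^{-\lambda M})$, and since $0\le e^{m}-e^{\lambda m}\le e^{m}$ and $0\le e^{-M}-e^{-\lambda M}\le e^{-M}$ (using $\lambda\ge\chi=1$), the right-hand side is bounded factor by factor by $e^{m}e^{-M}=e^{m-M}$. This closes the non-negativity. The principal effort, then, is careful bookkeeping of the normalising constant and justification of the two Fubini interchanges rather than any conceptual obstacle, the one substantive point being that pointwise positivity must be read off the explicit formula in the opposite-sign regime.
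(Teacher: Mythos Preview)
Your proof is correct and follows essentially the same route as the paper: both write $w(y)-\langle w\rangle_\lambda$ as an integral of $w'$ against the kernel $M_\lambda(s)-\mathbf 1_{\{s>y\}}$, square and integrate in $y$ to obtain $\Omega_{\lambda,\chi}$ as a Gram-type kernel, and then verify pointwise non-negativity by exploiting that $M_\lambda(x)-M_\chi(x)$ has the sign of $x$. The only cosmetic differences are that you package the representation via the kernel $K_\lambda$ a bit more compactly, you organise the sign analysis by ``same sign / opposite sign'' of $(x_1,x_2)$ whereas the paper splits according to the sign of the larger argument, and you supply the Fubini justification that the paper leaves implicit.
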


\begin{proof}[Proof of Proposition~\ref{lem:poincare}]
We assume again $\chi=1$, denote $\Omega_{\lambda}:=\Omega_{\lambda,1}$, and introduce the new function $W(x) =  w'(x) e^{-|x|/2}$. It then follows from Lemma~\ref{lem:Omega} that the Poincar\'e inequality \eqref{eq:poincare} is equivalent to
\begin{equation}\label{eq:quad32}
 \iint W(x_1)  W(x_2) \Omega_\lambda(x_1,x_2 )  e^{|x_1|/2}e^{|x_2|/2}   \,\rd x_1 \,\rd x_2 \leq 2 \int |W(x)|^2 \, \rd x\, . 
\end{equation}
The quadratic form on the left hand side can be reformulated as follows:
\begin{align*}
& \iint W(x_1)  W(x_2) \Omega_\lambda(x_1,x_2 )  e^{|x_1|/2}e^{|x_2|/2}   \,\rd x_1 \,\rd x_2 \\
&   =
\frac12\iint \left [ W(x_1) - W(x_2) \right ] \left [  W(x_2) - W(x_1)\right ] \Omega_\lambda(x_1,x_2 )  e^{|x_1|/2}e^{|x_2|/2}   \,\rd x_1 \,\rd x_2 \\
 & \quad  + \iint |W(x)|^2 \Omega_\lambda(x,y )  e^{|x|/2}e^{|y|/2}   \,\rd x \,\rd y \,.
\end{align*}
Therefore, in regard to \eqref{eq:quad32} and thanks to non-negativity of $\Omega_\lambda$ provided by Lemma~\ref{lem:Omega}, it is sufficient to prove the pointwise inequality
\begin{equation}\label{eq:pointwise}
 (\forall x)\quad e^{|x|/2} \int  \Omega_\lambda(x,y )  e^{|y|/2}    \,\rd y  \leq  2\, .
 \end{equation} 
 In fact, the left-hand-side above is independent of $\lambda$. To see this, note that $M_\lambda(-y)=1-M_\lambda(y)$, and so for any $R>0$,
 \begin{align*}
     \int_{-R}^R M_\lambda(y) e^{|y|/2}\, \rd y 
     &= \int_0^R \left(1-M_\lambda(y)\right) e^{|y|/2}\,\rd y + \int_0^R M_\lambda(y)e^{|y|/2}\\
     &= \int_0^R e^{|y|/2}\,\rd y = \frac{1}{2} \left(e^{R/2}-1\right)\,.
 \end{align*}
 Since this expression is independent of $\lambda$ for all $R>0$, we obtain
\begin{equation*}
\int \left ( M_\lambda(y)  - M_1(y)\right ) e^{|y|/2}    \,\rd y  = 0\, .
\end{equation*} 
Hence, it is enough to check the inequality \eqref{eq:pointwise} for $\lambda = 1$, {\em i.e.}
\begin{equation}\label{eq:bound}
 (\forall x)\quad e^{|x|/2} \int  \left  ( M_1(\min\{x,y\}) - M_1(x) M_1(y)  \right )  e^{|y|/2}    \,\rd  y  \leq  2\, .
\end{equation}
For $x>0$, this corresponds to showing
\begin{align}\label{eq:xposbound}
  (\forall x)\quad  &e^{x/2} \left (  \left  ( 1  - M_1(x)  \right )  \int_{-\infty}^x  M_1(y) e^{|y|/2}    \,\rd  y 
 +  M_1(x) \int_x^{\infty} \left  ( 1  - M_1(y)  \right )  e^{y/2}    \,\rd  y  \right ) \le 2\,.
\end{align}
Fix $R>x>0$. Again using $M_1(-y)=1-M_1(y)$, we have
\begin{align*}
    \int_{-R}^x M_1(y)e^{|y|/2}\, \rd y
    &= \int_0^R (1-M_1(y))e^{y/2}\,\rd y + \int_0^x M_1(y)e^{y/2}\,\rd y\\
    &= \int_0^R e^{y/2}\,\rd y - \int_x^R M_1(y)e^{y/2}\,\rd y\,.
\end{align*}
Hence, 
\begin{align*}
     &\left  ( 1  - M_1(x)  \right )  \int_{-R}^x  M_1(y) e^{|y|/2}    \,\rd  y 
 +  M_1(x) \int_x^{R} \left  ( 1  - M_1(y)  \right )  e^{y/2}    \,\rd  y \\
 &\quad =\left  ( 1  - M_1(x)  \right ) \left(\int_0^R e^{y/2}\,\rd y - \int_x^R M_1(y)e^{y/2}\,\rd y\right)
 +  M_1(x) \int_x^{R} e^{y/2}    \,\rd  y
 - M_1(x) \int_x^{R} M_1(y)    e^{y/2}    \,\rd  y\\
  &\quad =\int_0^R e^{y/2}\,\rd y 
  -\int_x^R M_1(y)e^{y/2}\,\rd y
  -M_1(x)\int_0^x e^{y/2}\,\rd y\\
  &\quad = -2+2e^{x/2}-\left(e^{-R/2}-e^{-x/2}\right) -\left(1-\frac{1}{2}e^{-x}\right)2\left(e^{x/2}-1\right)\\
  &\quad =-e^{-R/2} +e^{-x/2}\left(2-e^{-x/2}\right)\,.
\end{align*}
Taking $R\to\infty$, and multiplying by $e^{x/2}$ yields the desired bound \eqref{eq:xposbound}. Note that equality is achieved only in the limit $x\to\infty$. 
Similarly, simplifying \eqref{eq:bound} for $x<0$ and replacing $x$ with $-x$, one obtains again \eqref{eq:xposbound}.
This concludes the proof of the Poincar\'e inequality \eqref{eq:poincare}.
\end{proof}

\subsection{Energy dissipation}

To analyse the stability of solutions to \eqref{nonlineq:alphachi}, we define $v$ as the perturbation around the equilibrium, 
$$u=u_\infty+v=1+v\,,$$
Denoting $w(t,y):=\partial_y v(t,y)$, the perturbation $v$ satisfies
\begin{subequations}
 \label{model-v}
 \begin{align}
&\partial_t v(t,y) = e^{\chi|y|}\partial_y\left(e^{-\chi|y|} \left(\partial_yv(t,y)- \mu[v(t)] (1+v(t,y))\right)\right)\, ,\label{model-v:1}\\
& \mu[v(t)]:=\frac{\lambda w(t,0)}{\chi+\lambda v(t,0)-\sqrt{\alpha} \langle v(t) \rangle_\lambda}\, .\label{model-v:2}
\end{align}
\end{subequations}
and the two conservation laws rewrite as
$$\langle v(t) \rangle_\chi=0\, , \qquad \langle w(t)\rangle_\lambda=0\,,\qquad \forall t\ge 0\, . $$ 
 
Next, we introduce the weigthed energy functionals
\begin{align*}
 &E(t)=\dfrac12 \int_{-\infty}^\infty \left|v(t,y)  \right|^2 e^{-\chi |y|}\,\rd y\,,\\
 &F(t)=\dfrac12 \int_{-\infty}^\infty |w(t,y)|^2 e^{-\chi |y|}\,\rd y\,,\\
 &G(t)=\dfrac12 \int_{-\infty}^\infty |\partial_y w(t,y)|^2 e^{-\chi|y|}\,\rd y\,.
\end{align*}
In short, we denote $E(t) = \frac{1}{2}\|v(t)\|^2_{L^2_\chi}$, $F(t) =\frac12 \|w(t)\|^2_{L^2_\chi}$, and $G(t) = \frac{1}{2}\| \partial_y w (t)\|^2_{L^2_\chi}$, where $\|\cdot\|_{L^2_\chi}$ denotes the weighted $L^2$ norm.

The weighted $H^2$ norm $G$ is introduced in order to control the pointwise term $w(t,0)$ in \eqref{model-v:2} that cannot be controlled in $H^1$ regularity. Our main estimate is the following.

 \begin{proposition}[Entropy dissipation]\label{prop:Fdiss}
 The dissipation of $F$ along solutions $v(t,y)$ to equation \eqref{model-v} is given by
\begin{equation}
 \frac{\rd}{\rd t} F(t)=-2G(t) + 2\sqrt{\alpha} w(t,0)^2  + J(t)\,,\label{dtF alpha}
\end{equation}
where $J$ is a higher-order (genuinely non-linear) contribution defined in \eqref{nonlinJ}.
\end{proposition}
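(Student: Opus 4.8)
The plan is to differentiate the perturbation equation \eqref{model-v:1} once in $y$ to obtain a closed evolution for $w=\partial_y v$, and then to run a weighted $L^2_\chi$ energy estimate, paying close attention to the interface $y=0$. Since $\mu[v(t)]$ depends on $t$ only, I would first expand the outer derivative in \eqref{model-v:1} using $\partial_y e^{-\chi|y|}=-\chi\,\sign(y)\,e^{-\chi|y|}$ to bring the equation to the non-divergence form
\[ \partial_t v=\partial_y P-\chi\,\sign(y)\,P,\qquad P:=w-\mu[v]\,(1+v), \]
and then, differentiating in $y$ away from the interface (where $\sign(y)$ is locally constant),
\[ \partial_t w=\partial_y^2 w-\mu[v]\,\partial_y w-\chi\,\sign(y)\,\partial_y w+\chi\,\mu[v]\,\sign(y)\,w,\qquad y\neq0. \]

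Next I would compute $\frac{\rd}{\rd t}F=\int w\,\partial_t w\,e^{-\chi|y|}\,\rd y$ by splitting the integral over $(-\infty,0)$ and $(0,\infty)$ and integrating by parts on each half-line, which is legitimate because the one-sided limits $\partial_y w(t,0^\pm)$ exist even though $\partial_y w$ jumps. The $\partial_y^2 w$ term yields $-2G$, a cross term $+\chi\int\sign(y)\,w\,\partial_y w\,e^{-\chi|y|}$, and an interface contribution $-w(t,0)\,[\partial_y w]_0$. The cross term is cancelled exactly by the $-\chi\,\sign(y)\,\partial_y w$ term, while the two $\mu[v]$ terms combine, using $\int w\,\partial_y w\,e^{-\chi|y|}=\tfrac{\chi}{2}\int\sign(y)\,w^2\,e^{-\chi|y|}$, into a single contribution. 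At this stage
\[ \frac{\rd}{\rd t}F=-2G-w(t,0)\,[\partial_y w]_0+\frac{\chi\,\mu[v]}{2}\int_{-\infty}^\infty \sign(y)\,w^2\,e^{-\chi|y|}\,\rd y, \]
where $[\partial_y w]_0:=\partial_y w(t,0^+)-\partial_y w(t,0^-)$.

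The decisive ingredient is the jump relation for $\partial_y w=\partial_y^2 v$ at the interface. Since the flux $e^{-\chi|y|}P$ (and hence $\partial_t v=e^{\chi|y|}\partial_y(e^{-\chi|y|}P)$) carries no singular part at $y=0$, matching the one-sided limits of $\partial_y P-\chi\,\sign(y)\,P$ forces
\[ [\partial_y w]_0=2\chi\,P(t,0)=2\chi\big(w(t,0)-\mu[v]\,(1+v(t,0))\big). \]
Substituting and recalling $\mu[v]=\lambda w(t,0)/D$ with $D:=\chi+\lambda v(t,0)-\sqrt\alpha\,\langle v\rangle_\lambda$ and $\lambda=\chi+\sqrt\alpha$, I would isolate the quadratic part of $-w(t,0)[\partial_y w]_0=-2\chi\,w(t,0)^2+2\chi\,\mu[v]\,w(t,0)(1+v(t,0))$: at leading order $D\approx\chi$ gives $2\chi\,\mu[v]\,w(t,0)\approx 2\lambda\,w(t,0)^2$, so the quadratic contribution is $(-2\chi+2\lambda)\,w(t,0)^2=2\sqrt\alpha\,w(t,0)^2$, which is the announced linear term. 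All remaining pieces are cubic or higher; using $\chi-\lambda=-\sqrt\alpha$ they telescope into the genuinely non-linear remainder (equation \eqref{nonlinJ})
\[ J=2\lambda\,w(t,0)^2\,\frac{\sqrt\alpha\,(\langle v\rangle_\lambda-v(t,0))}{\chi+\lambda v(t,0)-\sqrt\alpha\,\langle v\rangle_\lambda}+\frac{\chi\,\mu[v]}{2}\int_{-\infty}^\infty \sign(y)\,w^2\,e^{-\chi|y|}\,\rd y. \]

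I expect the main obstacle to be the rigorous treatment of the interface $y=0$: justifying the one-sided integrations by parts under the assumed regularity, and above all deriving the jump relation $[\partial_y w]_0=2\chi\,P(t,0)$ from continuity of the flux, since it is precisely this identity that converts the a priori uncontrolled boundary term $-w(t,0)[\partial_y w]_0$ into the explicitly signed quantity $2\sqrt\alpha\,w(t,0)^2$. The remaining work is purely algebraic bookkeeping, namely verifying that the quadratic part is exactly $2\sqrt\alpha\,w(t,0)^2$ and that everything else is genuinely of higher order in $(v,w)$.
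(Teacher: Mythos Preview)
Your computation is correct and lands on $\dot F=-2G+2\sqrt\alpha\,w(0)^2+J$ with a genuinely cubic remainder, but you take a different route from the paper. The paper never passes to non-divergence form and never integrates on half-lines: it keeps $\partial_t w=\partial_y\bigl(e^{\chi|y|}\partial_y(e^{-\chi|y|}P)\bigr)$, integrates by parts once on the whole line to obtain $I_1=-\int e^{\chi|y|}\bigl|\partial_y(e^{-\chi|y|}w)\bigr|^2\,\rd y$, and then the interface contribution $-2\chi w(0)^2$ emerges from expanding the square via the distributional identity $\partial_y\bigl(\sign(y)e^{-\chi|y|}\bigr)=2\delta_0-\chi e^{-\chi|y|}$. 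The remaining $+2\lambda w(0)^2$ comes from the leading part $I_2'$ of the $\mu[v]$-integral. In your approach the same two pieces instead arise from the boundary term $-w(0)[\partial_y w]_0$ combined with the jump relation $[\partial_y w]_0=2\chi P(0)$. The two mechanisms are equivalent, but the paper's buys you a little robustness: it never needs to argue that $\partial_t v$ is continuous across $y=0$ (your jump relation is exactly that statement), whereas your route makes the role of the interface more transparent and isolates the jump condition as a reusable identity.

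One point to flag: your explicit formula for $J$ is \emph{not} the formula \eqref{nonlinJ}. The paper's $J=J_1+J_2$ packages the nonlinearity as two bulk integrals of the form $\int e^{\chi|y|}\partial_y(we^{-\chi|y|})\,\partial_y(e^{-\chi|y|}\,\cdot\,)\,\rd y$, one carrying the $v$ from $(1+v)$ and one carrying the correction $\mu[v]-\tfrac{\lambda}{\chi}w(0)$; yours is a pointwise cubic term at $y=0$ plus the bulk integral $\tfrac{\chi\mu[v]}{2}\int\sign(y)\,w^2 e^{-\chi|y|}\,\rd y$. The two expressions coincide in value (both equal $\dot F+2G-2\sqrt\alpha\,w(0)^2$), but since the proposition refers to $J$ \emph{as defined in} \eqref{nonlinJ}, you should either reconcile the formulas or note that the downstream estimate \eqref{eq:est J} is equally easy for your version.
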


\begin{proof}
Differentiating $F(t)$ along solutions of \eqref{model-v}, substituting the evolution of $\partial_t w$,
and integrating by parts, we have
\begin{align*}
\dot{F}
&= \int_{-\infty}^\infty w\partial_t w e^{-\chi|y|}\, \rd y
= \int_{-\infty}^\infty w\partial_y\left(e^{\chi|y|}\partial_y\left(e^{-\chi|y|} \left(w-(1+v)\mu[v]\right)\right)\right)e^{-\chi|y|}\, \rd y\\
&= -\int_{-\infty}^\infty e^{\chi|y|} \left |\partial_y \left (  e^{-\chi|y|} w \right ) \right |^2 \, \rd y
 + \mu[v] \int_{-\infty}^\infty e^{\chi|y|} \partial_y \left (  w e^{-\chi|y|}\right )  \partial_y\left(e^{-\chi|y|} (1+v) \right) \, \rd y
\\
&= I_1+I_2\,.
\end{align*}
We evaluate the terms $I_1$ and $I_2$ separately. Expanding $I_1$, we find
\begin{align*}
I_1
&=-\int_{-\infty}^\infty  e^{-\chi|y|} \left | \partial_y w - \chi (\sign y) w \right |^2 \, \rd y\\
&= -\int_{-\infty}^\infty e^{-\chi|y|} \left | \partial_y w  \right |^2 \, \rd y + 2 \chi \int_{-\infty}^\infty e^{-\chi|y|}   \partial_y w  (\sign y) w \, \rd y
- \chi^2 \int_{-\infty}^\infty e^{-\chi|y|} w^2\, \rd y
\\
&=  -\int_{-\infty}^\infty e^{-\chi|y|} \left | \partial_y w  \right |^2 \, \rd y
- \chi \int_{-\infty}^\infty \partial_y \left ( e^{-\chi|y|} (\sign y) \right ) |w|^2 \, \rd y  
- \chi^2 \int_{-\infty}^\infty e^{-\chi|y|} |w|^2\, \rd y
\\
&= -\int_{-\infty}^\infty e^{-\chi|y|} \left | \partial_y w  \right |^2 \, \rd y - 2 \chi w(t,0)^2 \,.
\end{align*}
Dealing with $I_2$, we separate the quadratic terms from the higher order contributions, 
\begin{align*}
I_2
&= \frac\lambda\chi w(t,0)\int_{-\infty}^\infty e^{\chi|y|} \partial_y \left (  w e^{-\chi|y|}\right )  \partial_y\left(e^{-\chi|y|}  \right) \, \rd y
\\
& \quad + \frac\lambda\chi w(t,0) \int_{-\infty}^\infty e^{\chi|y|} \partial_y \left (  w e^{-\chi|y|}\right )  \partial_y\left(e^{-\chi|y|} v  \right) \, \rd y
\\
& \quad + \left ( \mu[v] -  \frac\lambda\chi w(t,0) \right ) \int_{-\infty}^\infty e^{\chi|y|} \partial_y \left (  w e^{-\chi|y|}\right )  \partial_y\left(e^{-\chi|y|} (1+v) \right) \, \rd y\\
& = I_2' + J_1 + J_2\, . 
\end{align*}
We can rearrange the quadratic term $I_2'$ as follows
\begin{align*}
I_2' &= -  \lambda w(t,0)\int_{-\infty}^\infty  \partial_y \left (  w e^{-\chi|y|}\right ) (\sign y) \, \rd y \\
& = 2 \lambda w(t,0)^2 \, .
\end{align*}
This concludes the proof, with $J(t):=J_1(t)+J_2(t)$.
\end{proof}

In order to turn the entropy dissipation result Proposition~\ref{prop:Fdiss} into exponential relaxation to equilibrium (see Section~\ref{sec:cv}), we will make use of the improved Poincar\'e inequality \eqref{eq:poincare} together with the following lemma: 
\begin{lemma}[Interpolation inequality]\label{lem:w0}
Fix $a\ge b>0$. For any function $f\in L^1_a(\R)$ such that $f'\in L^2_b(\R)$, we have 
\begin{equation}
\left | f(0) -  \langle f \rangle_a \right|^2 \leq  \left(\frac1{2a-b}\right) \frac12 \int |f'(y)|^2e^{-b|y|}\,\rd y\, .
\end{equation}
\end{lemma}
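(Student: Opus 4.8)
The plan is to reduce the inequality to a single application of the Cauchy--Schwarz inequality, once $f(0) - \langle f \rangle_a$ has been rewritten as a suitable weighted integral of $f'$. The starting point is the representation formula
\[
f(0) - \langle f \rangle_a = \frac12 \int_{-\infty}^\infty f'(y)\,\sign(y)\, e^{-a|y|}\,\rd y\,.
\]
To derive it, I would begin from $\langle f\rangle_a - f(0) = \frac a2 \int_{-\infty}^\infty (f(y) - f(0))\, e^{-a|y|}\,\rd y$, which is valid because $\frac a2\int e^{-a|y|}\,\rd y = 1$. Substituting $f(y) - f(0) = \int_0^y f'(s)\,\rd s$ and exchanging the order of integration by Fubini, one treats the two half-lines separately: on $\{y>0\}$ the inner variable satisfies $0<s<y$, so the $y$-integration yields $\int_s^\infty e^{-ay}\,\rd y = a^{-1}e^{-as}$; on $\{y<0\}$ one has $y<s<0$, and since $\int_0^y = -\int_y^0$, the $y$-integration yields $-a^{-1}e^{as}$. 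Collecting the two contributions and cancelling the factor $a$ produces the formula above.

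Next, I would split the weight as $e^{-a|y|} = e^{-b|y|/2}\cdot e^{-(a - b/2)|y|}$ and apply Cauchy--Schwarz, placing $f'(y)\, e^{-b|y|/2}$ in the first factor and $\sign(y)\, e^{-(a-b/2)|y|}$ in the second. After squaring this gives
\[
\left| f(0) - \langle f\rangle_a\right|^2 \le \frac14 \left(\int_{-\infty}^\infty |f'(y)|^2 e^{-b|y|}\,\rd y\right)\left(\int_{-\infty}^\infty e^{-(2a - b)|y|}\,\rd y\right)\,.
\]

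Finally I would evaluate the elementary exponential integral $\int_{-\infty}^\infty e^{-(2a-b)|y|}\,\rd y = 2/(2a-b)$, which is finite precisely because $2a - b \ge a > 0$ under the hypothesis $a \ge b > 0$; this positivity is also what makes the second Cauchy--Schwarz factor legitimate. Substituting and combining the prefactors, $\frac14 \cdot \frac{2}{2a-b} = \frac{1}{2a-b}\cdot\frac12$, yields exactly the claimed bound.

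The only delicate point is the derivation of the representation formula: one must correctly track the sign change of $\int_0^y$ across $y=0$ and justify the use of Fubini. Here the hypotheses $f\in L^1_a$ and $f'\in L^2_b$, together with $2a-b>0$ (which guarantees $\sign(y)\,e^{-(a-b/2)|y|}\in L^2(\R)$), ensure absolute integrability of the double integral and hence the validity of the exchange. Everything else is a single Cauchy--Schwarz step followed by an explicit integral, so I do not anticipate any further obstacle.
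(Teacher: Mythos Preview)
Your proposal is correct and follows essentially the same route as the paper: represent $f(0)-\langle f\rangle_a$ as $\frac12\int f'(y)\,\sign(y)\,e^{-a|y|}\,\rd y$ (the paper obtains this by a direct integration by parts rather than Fubini, and with the opposite sign---your stated formula should read $\langle f\rangle_a-f(0)$ on the left, though this is immaterial once absolute values are taken), then apply Cauchy--Schwarz with the same splitting of the exponential weight and evaluate $\int e^{-(2a-b)|y|}\,\rd y=2/(2a-b)$.
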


\begin{proof}
We first perform some singular integration by parts in order to relate $f(0)$ and $f'(y)$. 
\begin{equation*}
\frac1 2 \int_\R f'(y) (\sign y) e^{-a |y|}\, \rd y = \frac{a}{2}\int_\R f  e^{-a |y|}\, \rd y - f(0) = \langle f \rangle_a -f(0)\, . 
\end{equation*}
By  Young's inequality, we find
\begin{align*}
 |f(0) - \langle f \rangle_a| &= \left|\frac{1}{2} \int_\R \sign(y) f'(y) e^{-a|y|}\, \rd y \right| 
 \leq \frac{1}{2} \int_\R |f'(y)| e^{-a|y|} \, \rd y \\
 &\leq \frac{1}{2} \left(\int_\R |f'(y)|^2 e^{-b|y|} \, \rd y \right)^{1/2}\left(\int_\R e^{-(2a-b)|y|} \, \rd y \right)^{1/2}\\
 & \leq  \frac12 \left ( \dfrac{2}{2a-b} \right )^{1/2} \left(\int_\R |f'(y)|^2 e^{-b|y|} \, \rd y \right)^{1/2}\,.
\end{align*}
Raising it to the square, we obtain the result. 
\end{proof}

\section{Exponential relaxation to equilibrium (perturbative analysis)}\label{sec:cv}

This section is devoted to the proof of Theorem \ref{thm:main}, with the help of the previous energy estimates. We proceed in two steps. Firstly, we prove that the shape converges to equilibrium (meaning that $u$ converges to a constant unit value). Secondly, we prove that the center $\bx(t)$ converges to a finite value (not determined).

\begin{proof}[Proof of Theorem \ref{thm:main} -- Step 1: convergence of the shape]
Applying Lemma \ref{lem:w0} to $w\in L^1_\lambda$ with $w'\in L^2_\chi$, and noting that $\langle w \rangle_\lambda=0$ by the second conservation law, we have
\begin{equation}\label{w0est}
     \left | w(t,0)  \right|^2 \leq  \frac 1{\chi + 2\sqrt{\alpha}}\,G(t)\, .
\end{equation}
Combining with Proposition \ref{prop:Fdiss}, we find
\begin{equation}
 \frac{\rd}{\rd t} F(t)\le-\left ( \frac{\chi + \sqrt{\alpha}}{\frac\chi2 + \sqrt{\alpha}} \right ) G(t) +   J(t)\,,\label{dtF alpha 2}
 \end{equation} 
where the higher-order contributions in $J(t)$ reads as follows,
\begin{multline}\label{nonlinJ}
J(t) = \frac\lambda\chi w(t,0) \int_{-\infty}^\infty e^{\chi|y|} \partial_y \left (  w e^{-\chi|y|}\right )  \partial_y\left(e^{-\chi|y|} v  \right) \, \rd y
\\
 + \frac{\lambda}{\chi} \left( \dfrac{w(t,0)\left ( \sqrt{\alpha} \langle v(t) \rangle_\lambda - \lambda v(t,0) \right ) }{\chi+\lambda v(t,0)-\sqrt{\alpha} \langle v(t) \rangle_\lambda} \right) \int_{-\infty}^\infty e^{\chi|y|} \partial_y \left (  w e^{-\chi|y|}\right )  \partial_y\left(e^{-\chi|y|} (1+v) \right) \, \rd y\, .
\end{multline}
The first integral involves $\| \partial_y w \partial_y v \|_{L^1_\chi}$,  $\| \partial_y w   v \|_{L^1_\chi}$,  $\|  w \partial_y v \|_{L^1_\chi}$ and  $\| w v \|_{L^1_\chi}$, which are all controlled quadratically by $E,F$ and $G$. In addition, the prefactor $w(t,0)$ is controlled by $G^{1/2}$ following equation \eqref{w0est}.  Similarly, the second contribution in \eqref{nonlinJ} follows the same pattern, as both $ \langle v(t) \rangle_\lambda$ and $v(t,0)$ are controlled by interpolation using Lemma~\ref{lem:w0}:
\begin{align*}
&\left | v(t,0) -  \langle v(t) \rangle_\lambda \right|^2 \leq  \frac 1{\chi + 2\sqrt{\alpha}}F(t)\,,\qquad
\left | v(t,0) -  \langle v(t) \rangle_\chi \right|^2 \leq  \frac 1{\chi}F(t)\, ,
\end{align*}
and the conservation law $ \langle v(t) \rangle_\chi = 0$. It follows that
\begin{align*}
    \chi+\lambda v(t,0)-\sqrt{\alpha} \langle v(t) \rangle_\lambda 
    \ge \chi-\left( \sqrt{\chi}+\sqrt{\frac{\alpha}{\chi+2\sqrt{\alpha}}}\right) F^{1/2}(t)\,,
\end{align*}
and so
\begin{align*}
    \dfrac{\sqrt{\alpha} \langle v(t) \rangle_\lambda - \lambda v(t,0) }{\chi+\lambda v(t,0)-\sqrt{\alpha} \langle v(t) \rangle_\lambda} 
    \le C \frac{F^{1/2}(t)}{1-CF^{1/2}(t)}\,,
\end{align*}
for some constant $C>0$ depending on $\chi$ and $\alpha$, provided that $F$ is small enough.
Therefore, 
\begin{align}\label{eq:ninlin}
|J(t)| &\leq C G(t)^{1/2} \left ( G(t)^{1/2} F(t)^{1/2} + G(t)^{1/2} E(t)^{1/2}  + F(t) + F(t)^{1/2} E(t)^{1/2}  \right )
\\
&\qquad +C G(t)^{1/2} \left ( \frac{ F(t)^{1/2}}{1 - C  F(t)^{1/2} }\right ) \times \notag\\
&\qquad \quad \left ( G(t)^{1/2} + F(t)^{1/2} +G(t)^{1/2} F(t)^{1/2}  + G(t)^{1/2} E(t)^{1/2}  + F(t) + F(t)^{1/2} E(t)^{1/2}  \right )\notag
\end{align}
for some constant $C$ depending on $\chi$ and $\alpha$, provided that $F$ is small enough. Now, by the improved Poincar\'e inequality Proposition~\ref{lem:poincare} and the second conservation law, we obtain $$F(t) \leq \frac4{\chi^2} G(t)\,.$$  Moreover, the classical Poincar\'e inequality with exponential weight together with the first conservation law yield $$E(t) \leq \frac4{\chi^2} F(t)\,.$$
The important point is to isolate the terms with higher derivatives, that is, the terms that are controlled by $G(t)$. In fact,  using the two inequalities above, the bound \eqref{eq:ninlin} can be simplified to the following estimate:
\begin{equation}\label{eq:est J}
|J(t)| \leq C \left ( \dfrac{ F(t)^{1/2}  +  F(t) }{1 - C F(t)^{1/2}} \right ) G(t)\,.
\end{equation}
As a consequence, we can control the relaxation of $F(t)$ using the estimate \eqref{dtF alpha 2}: assuming that $F(0)$ is small enough, 
we get 
\begin{equation}
 \frac{\rd}{\rd t} F(t)\le - \left (  2\gamma_0 - C F(t)^{1/2} \right ) F(t) \,, \quad \text{with}\quad \gamma_0 = \frac{\chi^2}{8} \left ( \frac{\chi + \sqrt{\alpha}}{\frac\chi2 + \sqrt{\alpha}} \right )\,. \label{dtF alpha 3}
\end{equation} 
We conclude that $F(t)$, hence $E(t)$, relaxes exponentially fast to zero, provided that $F(0)$ is initially small enough. Moreover, the rate of convergence is asymptotically close to $\gamma_0$ as given in \eqref{sigma}. 
To conclude the proof of the exponential decay \eqref{eq:exp relax}, we simply note that
\begin{equation*}
\|\tilde \rho(t,\cdot) - \tilde \rho_\infty\|_{H^1_\chi}^2 = 2E(t)+2F(t) \le \left(\frac{8}{\chi^2}+2\right)F(t)\,.
\end{equation*}
\end{proof}

\begin{remark}
It follows from \eqref{dtF alpha 3} that the asymptotic rate of convergence $\gamma$ can be chosen arbitrarily close to the upper bound $\gamma_0$, however at the expense of increasing the prefactor as mentioned in Theorem~\ref{thm:main}. Additionally, it also means that in Theorem~\ref{thm:main} we can choose any $\eps_0$ that satisfies
$
0<\eps_0< 4\gamma_0^2/C^2
$
where $C>0$ as defined in \eqref{dtF alpha 3}.
More precisely, if initially $\|\tilde \rho_0 - \tilde \rho_\infty\|_{H^1_\chi}^2$ is close to the upper bound, i.e. $F(0)=\frac{4}{C^2}(\gamma_0-\delta)^2$ with $\delta>0$ arbitrarily small, then the above argument still provides decay to equilibrium, with a potentially very slow rate of convergence $0<\gamma'\le \delta$. It follows that for any arbitrarily small $0<\eps<\eps_0$, there exists a large enough time $T_\eps>0$ such that $F(T_\eps)\le \eps$. From \eqref{dtF alpha 3} we conclude
$$
F(t)\le \left( \eps e^{2\gamma T_\eps}\right) e^{-2\gamma t}\qquad \text{ for all } \, t>T_\eps\,,
$$
that is, the prefactor becomes large as  $T_\eps\to\infty$ together with $\gamma\to\gamma_0$.
\end{remark}

\begin{proof}[Proof of Theorem \ref{thm:main} -- Step 2: convergence of the center]
We recall the dynamics \eqref{eq:xdot3}--\eqref{eq:relative dens} of the center $\bx(t)$:
\begin{align*}
    \dxt(t)
    =\frac{w(t,0)}{\partial^2_{y}S(t,0)}\,.
\end{align*}
On the one hand, we have \eqref{nonlineq:alphachi:3}:
\begin{equation}
- \partial^2_{y}S(t,0) = \frac{1}{\lambda} \left ( \chi +  \lambda v(t,0) -  \sqrt{\alpha} \langle v(t) \rangle_\lambda \right ) \, . 
\end{equation}
By the same argument as above, $-\partial^2_{y}S(t,0)$ is bounded below uniformly for $t>0$ provided that $F(0)$ is small enough, simply because the term $\lambda v(t,0) -  \sqrt{\alpha} \langle v(t) \rangle_\lambda$ is controlled by $F(t)$ which is exponentially decaying in that case.  

On the other hand, the value $w(t,0)$ is estimated by the following basic inequality:
\begin{align}\label{eq:w0est-basic}
    |w(t,0)|
    \le C\left ( G(t) + F(t)^{1/3} + F(t)^{1/2} \right )\,.
\end{align}
Indeed, we have by integration by parts,
\begin{align*}
    -\frac{1}{2}\int_{-\infty}^{\infty} \partial_y\left(w^2\right)\sign(y)e^{-\chi|y|}\, \rd y
    =w(0)^2- \frac{\chi}{2}\int_{-\infty}^{\infty} w^2 e^{-\chi|y|}\, \rd y
    =w(0)^2-\chi F(t)\,.
\end{align*}
It follows that
\begin{align*}
    |w(t,0)|^2
    &=\left| -\frac{1}{2}\int_{-\infty}^{\infty} \partial_y\left(w^2\right)\sign(y)e^{-\chi|y|}\, \rd y
 +\chi F(t)\right|\\
      &\le \left(\int_{-\infty}^{\infty} \left|w \right|^2e^{-\chi|y|}\, \rd y\right)^{1/2}\left( \int_{-\infty}^{\infty} \left|\partial_y w\right|^2e^{-\chi|y|}\, \rd y\right)^{1/2}
 +\chi F(t)\\
       &\le 2F(t)^{1/2}G(t)^{1/2}
 +\chi F(t)\,.
\end{align*}
Therefore,
\begin{align*}
    |w(t,0)|
       &\le \left(2F(t)^{1/2}G(t)^{1/2}
 +\chi F(t)\right)^{1/2}\\
       &\le \sqrt{2}F(t)^{1/4}G(t)^{1/4}
 +\sqrt{\chi} F(t)^{1/2}\\ 
        &\le 
C \left ( G(t)+ F(t)^{1/3}+  F(t)^{1/2}\right ) \,,
\end{align*}
where the last estimate follows from Young's inequality. This concludes the proof of the bound \eqref{eq:w0est-basic}.

We deduce immediately that $\dxt$ is integrable, since $F$ is exponentially decaying, whereas $G$ is part of the dissipation of $\dot F$ by \eqref{dtF alpha 2} and \eqref{eq:est J}. 
\end{proof}


\section{Numerics}

\begin{figure}
\begin{center}
\includegraphics[width = 0.45\linewidth]{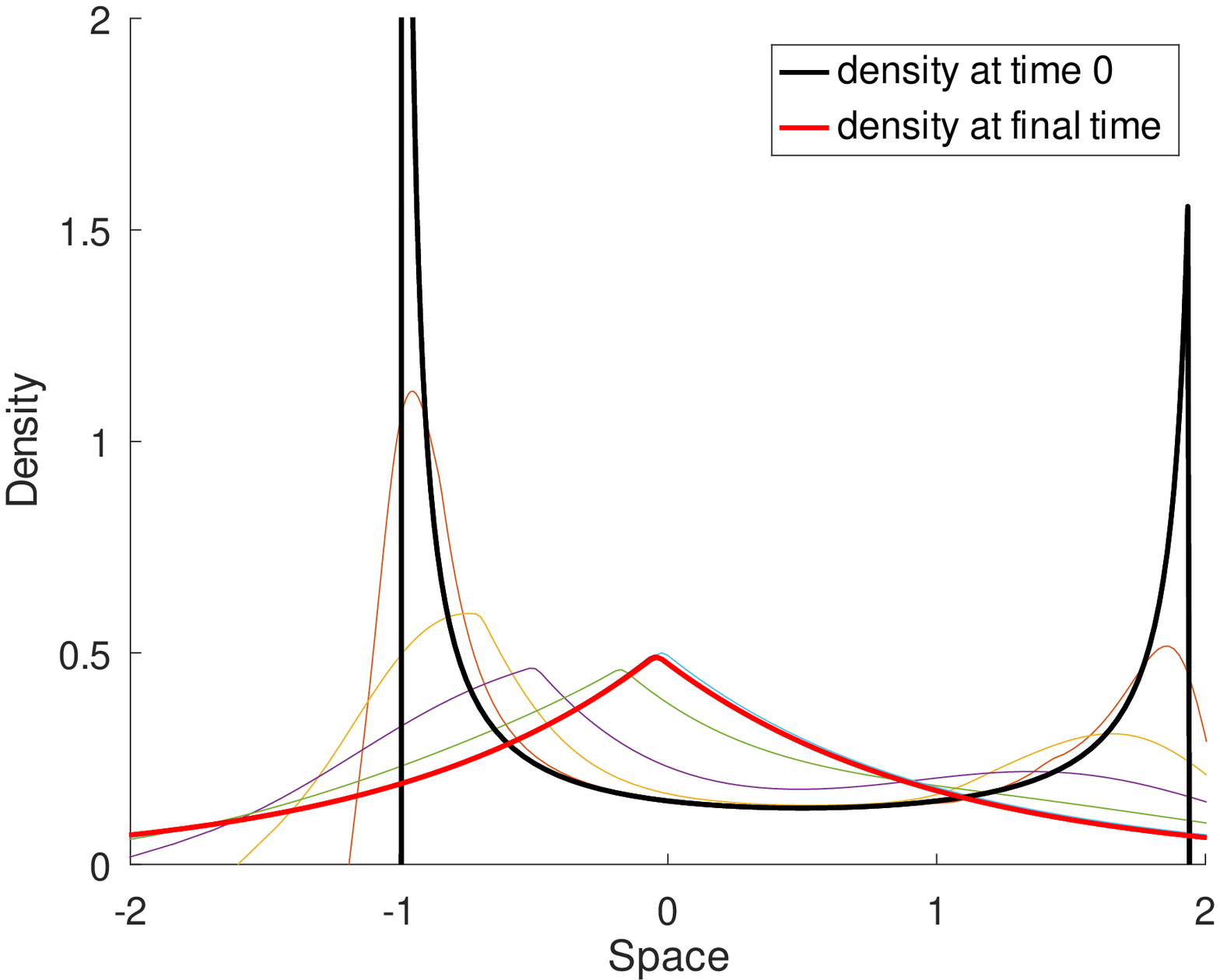}(a)\;
\includegraphics[width = 0.45\linewidth]{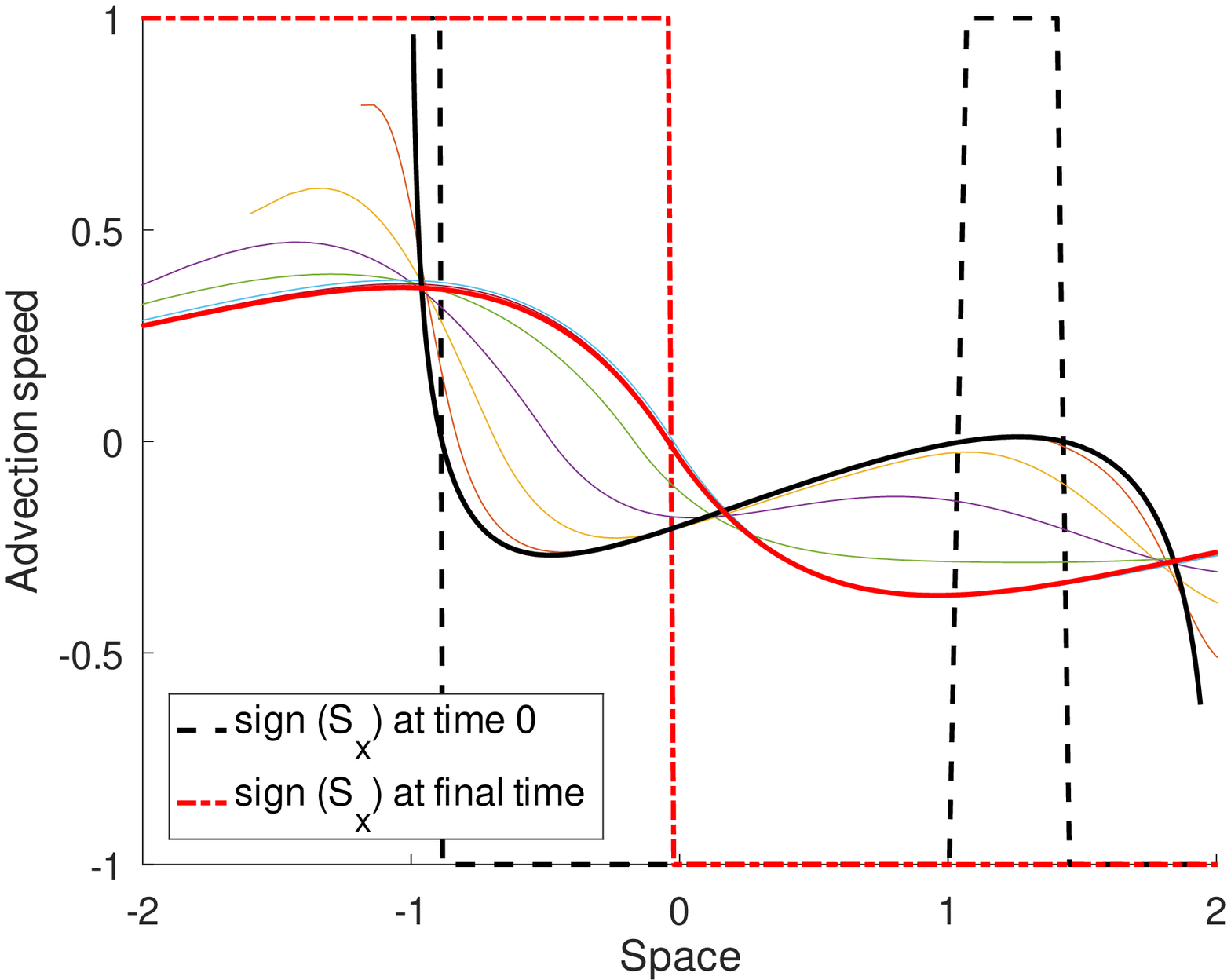}(b)\\
\includegraphics[width = 0.45\linewidth]{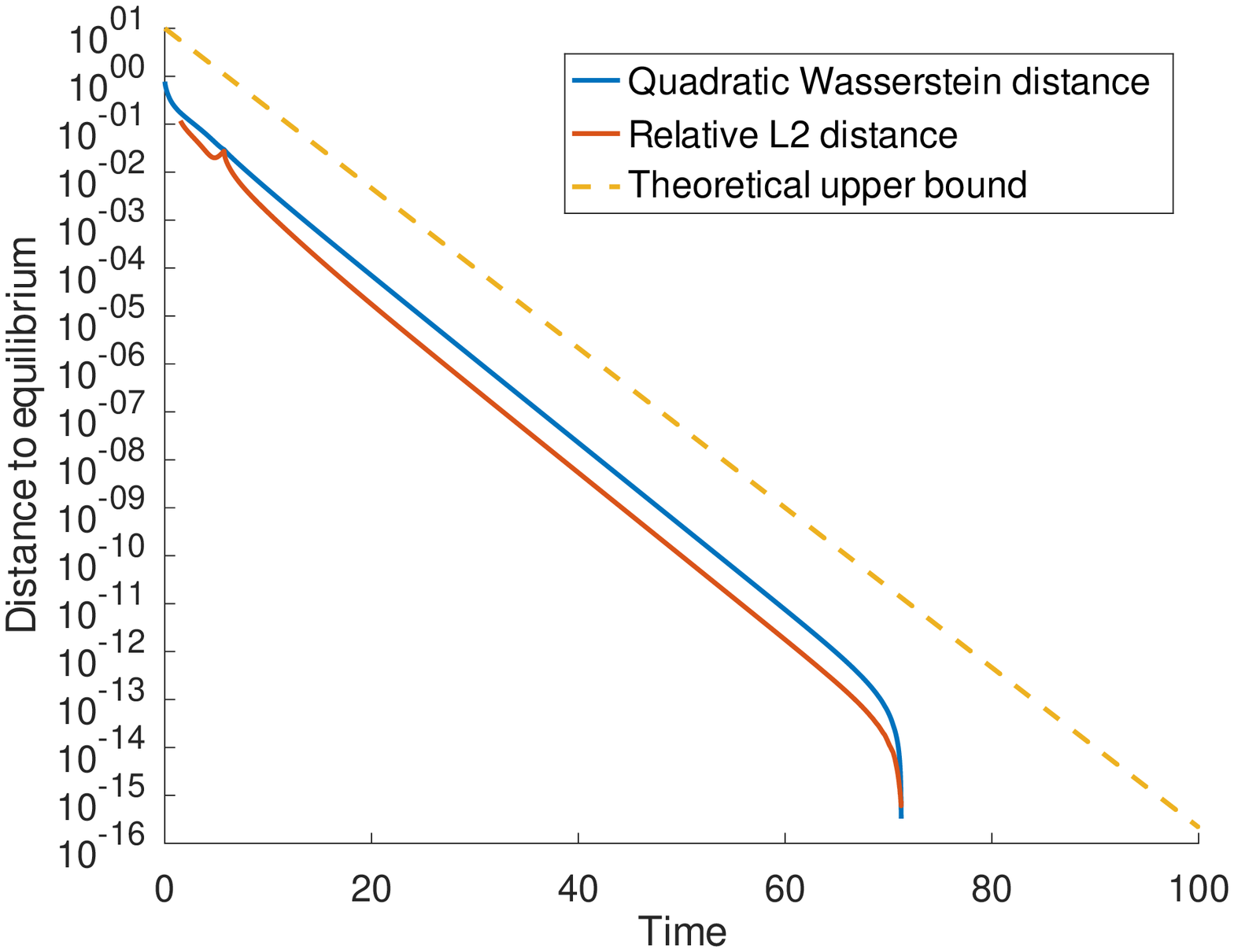}(c)\;
\includegraphics[width = 0.45\linewidth]{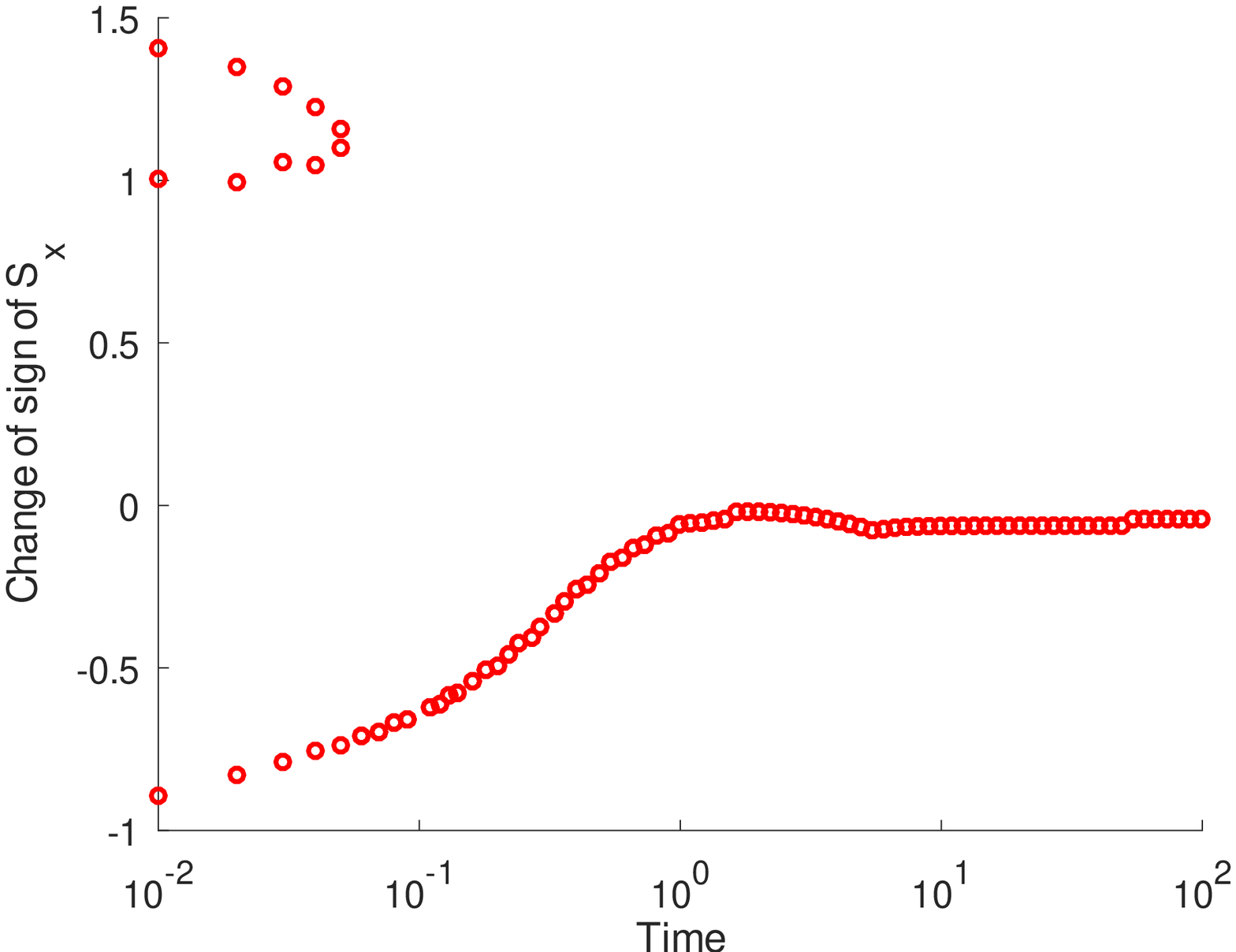}(d)\\
\caption{\small Convergence to equilibrium in system (\ref{quasimodel1})-(\ref{quasimodel2}): (a)  Convergence of the density $\rho$ towards the equilibrium state \eqref{eq:statstate1}. Despite having two peaks initially, it converges towards the stationary profile \eqref{eq:statstate1}. (b) Time evolution of the gradient $\partial_x S$. The initial configuration $S_0(x)$ has multiple critical points, violating the assumption in Proposition \ref{prop:H1}. However, the convergence still holds, as confirmed by (c), with an actual convergence rate better than the theoretical upper bound $\mathcal O(e^{-\gamma_0 t})$. (d) Location of the critical points of $S$ (change of sign of $\partial_x S$) as a function of time: there are three critical points initially, soon merging into a single $\bx(t)$ as assumed in Proposition \ref{prop:H1}. Parameters are: $\chi = \alpha = 1$. Discretization parameters are: $ \Delta \eta = \Delta t = 10^{-2}$.}
\label{fig}
\end{center}
\end{figure}

We performed a numerical investigation of  system (\ref{quasimodel1})-(\ref{quasimodel2}). We used a Lagrangian formulation, based on the inverse of the cumulative distribution function of $\rho$, that is, we define $\Pi$ and its inverse $X$ such that:
\begin{align*}
 &\Pi(t,x)=\int_{-\infty}^x \rho(t,y)\,dy,\\
\Pi(t,X(t,&\eta))=\eta, \qquad
X(t,\Pi(t,x))=x.
\end{align*}
By using the formulation of $S$ as a convolution with the fundamental solution of the elliptic problem (\ref{quasimodel2}), we obtain the following closed equation on $X(t,\eta)$:
\begin{equation*}
 \partial_t X(t,\eta)=-\partial_\eta \left( \frac{1}{\partial_\eta X(t,\eta)}\right) 
 - \chi \sign\left( \frac{1}{2} \int_0^1 \sign\left(X(t,\eta)-X(t,\tilde \eta)\right) e^{-\sqrt{\alpha}\left|X(t,\eta)-X(t,\tilde \eta)\right|}\,d \tilde \eta\right).
\end{equation*}
This formulation is well suited for the numerical simulation of interacting particles in one dimension of space, see  {\em e.g.} \cite{MR2255453,MR3431250}. We discretize the cumulative mass $\eta_i = i \Delta \eta$, for some step mass $\Delta \eta >0$. The discretization in mass rather than space provides more accuracy in regions of higher density. 
  
We use an implicit Euler scheme for the diffusion part, and an explicit scheme for the interaction part of the equation. Being given the solution $X_t(i)$ at time $t$ and at cumulative mass $\eta=i \Delta \eta$, we compute $X_{t+\Delta t}(i)$ by solving
\begin{multline*}
0 = X_{t+\Delta t}(i)- X_t(i)
+ \frac{\Delta t}{\Delta \eta} \,\left( \frac{\Delta \eta}{\left(X_{t+\Delta t}(i+1)-X_{t+\Delta t}(i)\right)} -  \frac{\Delta \eta}{\left(X_{t+\Delta t}(i)-X_{t+\Delta t}(i-1)\right)}\right) \\
+ \chi\, \Delta t \,  \sign\left( \frac{1}{2} \sum_j  \sign\left(X_t(i)-X_t(j)\right) e^{-\sqrt{\alpha}\left|X_t(i)-X_t(j)\right|}\right)
\end{multline*}
using the \copyright Octave  function  \emph{fsolve}. Then  the associated discretized density $\rho$ is recovered from $X$ by doing the reverse transformation using $\rho(t,X_t(i))=2\Delta \eta / \left(X_t(i+1)-X_t(i-1)\right)$.

Typical results are shown in Figure \ref{fig} for an asymmetrical initial density $\rho_0$ which is a relatively large perturbation of the equilibrium profile, associated with a initial signal $S_0$ that does not satisfy the condition of having a unique critical point (as in Proposition \ref{prop:H1}), see Figure \ref{fig}(d). However, after some transient period, it does admit a unique critical point. This suggests that the convergence holds beyond a perturbative regime as analyzed in the present work. We can also see from the numerical results that the theoretical rate of convergence is slightly underestimated.

We can draw a couple of perspectives from this numerical study. (i) It would be interesting to prove that no more than a critical point of $S$ can persist in the long time asymptotics, so that any initial configuration falls into the scope of Proposition \ref{prop:H1} after some time, as in Figure \ref{fig}. (ii) The rate of convergence \eqref{sigma} can certainly be improved with a more cautious analysis, and the identification of a suitable functional inequality. 


\section*{Acknowledgments}
This project has received funding from the European Research Council (ERC) under the European Union’s Horizon 2020 research and innovation programme (grant agreement No 639638).
FH was partially supported by the von Karman postdoctoral instructorship at California Institute of Technology, and through the Engineering and Physical Sciences Research Council (UK) grant number EP/H023348/1 for the University of Cambridge Centre for Doctoral Training, the Cambridge Centre for Analysis.  The authors benefited from fruitful discussion with Jean Dolbeault and Ivan Gentil about Proposition \ref{lem:poincare}. FH is grateful to Camille, Marine and Constance Bichet, and Joachim Schmitz-Justen and Rita Zimmermann for their hospitality during the SARS-CoV-2 outbreak that allowed to finish this project.


\nocite{*}
\bibliography{HOFFMANN-STABILITY-BACTERIA}
\bibliographystyle{acm}

\appendix 

\section{Reformulation of the improved Poincar\'e inequality}\label{sec:poincareproof}

\begin{lemma}
For $\lambda\ge \chi>0$, define $\Omega_{\lambda,\chi}:\R\times\R\to\R$ by 
\begin{equation}\label{def:OmegaApp}
\Omega_{\lambda,\chi}(x,y) =
\begin{cases}
 \left(M_\lambda(x) - M_\chi(x)\right)\left ( M_\lambda(y)  - M_\chi(y)\right ) + (1 - M_\chi(y))  M_\chi(x) & \quad \text{if $x\le y$}\,,\\
\left ( M_\lambda(x) - M_\chi(x)\right )\left ( M_\lambda(y)  - M_\chi(y)\right ) + (1 - M_\chi(x))  M_\chi(y) & \quad \text{if $x>y$}\,,
\end{cases}
\end{equation}
where $M_\lambda$ denotes the cumulative density function, $$M_\lambda(x) = \int_{z < x} \frac\lambda2 e^{-\lambda |z|} \,\rd z \,.$$
Then $\Omega_{\lambda,\chi}$ is non-negative and symmetric,
$$
\Omega_{\lambda,\chi}\ge 0\,,\qquad \Omega_{\lambda,\chi}(x,y)=\Omega_{\lambda,\chi}(y,x)\,,
$$
and we can rewrite the left-hand side of the Poincar\'e inequality \eqref{eq:poincare} as
\begin{equation*}
\frac12\int_{-\infty}^\infty |w(y) - \langle w \rangle_\lambda|^2 e^{- \chi |y|}\,\rd y 
 =   \iint  w'(x_1)   w'(x_2) \Omega_{\lambda,\chi}(x_1,x_2 )   \,\rd x_1 \,\rd x_2   \,.
\end{equation*}
\end{lemma}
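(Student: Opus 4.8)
The plan is to prove the quadratic-form identity first and then read off symmetry and non-negativity from the resulting kernel. The starting point is to express the deviation $w(y) - \langle w \rangle_\lambda$ as a weighted integral of $w'$. Since $M_\lambda$ is the cumulative distribution of the probability measure $\tfrac\lambda2 e^{-\lambda|z|}\,\rd z$, we have $\langle w \rangle_\lambda = \int w(z)\,\rd M_\lambda(z)$, and writing $w(y)-w(z) = \int_z^y w'(s)\,\rd s$ and applying Fubini yields
\begin{equation*}
w(y) - \langle w \rangle_\lambda = \int_{-\infty}^\infty w'(s)\,\bigl( M_\lambda(s) - \mathbf{1}_{\{s>y\}}\bigr)\,\rd s\,,
\end{equation*}
where the kernel $M_\lambda(s) - \mathbf{1}_{\{s>y\}}$ is obtained by integrating the oriented indicator $\mathbf{1}_{\{z<s<y\}} - \mathbf{1}_{\{y<s<z\}}$ against $\rd M_\lambda(z)$.

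Squaring this expression, multiplying by $\tfrac12 e^{-\chi|y|}$, integrating in $y$, and exchanging the order of integration identifies the kernel as
\begin{equation*}
\Omega_{\lambda,\chi}(x_1,x_2) = \frac12 \int_{-\infty}^\infty \bigl(M_\lambda(x_1) - \mathbf{1}_{\{x_1>y\}}\bigr)\bigl(M_\lambda(x_2) - \mathbf{1}_{\{x_2>y\}}\bigr) e^{-\chi|y|}\,\rd y\,.
\end{equation*}
Symmetry is then immediate, as the integrand is invariant under $x_1 \leftrightarrow x_2$. To recover the explicit piecewise formula \eqref{def:OmegaApp}, I would expand the product into four terms and integrate each against $e^{-\chi|y|}$, using $\tfrac12\int_{-\infty}^c e^{-\chi|y|}\,\rd y = \tfrac1\chi M_\chi(c)$ and $\mathbf{1}_{\{x_1>y\}}\mathbf{1}_{\{x_2>y\}} = \mathbf{1}_{\{y<\min(x_1,x_2)\}}$; collecting terms and splitting according to $\min(x_1,x_2)$ reproduces the two cases $x_1\le x_2$ and $x_1>x_2$ in \eqref{def:OmegaApp} (after the reduction $\chi=1$ already adopted in the main text, which removes the harmless factor $\tfrac1\chi$).

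The only genuinely delicate point is pointwise non-negativity $\Omega_{\lambda,\chi}\ge 0$, which does \emph{not} follow from the Gram-type representation above: that representation only yields positive semi-definiteness of $\Omega_{\lambda,\chi}$ as a kernel, not its pointwise positivity. I would argue by cases on the signs of $x$ and $y$ using the closed forms $M_r(z)=\tfrac12 e^{rz}$ for $z\le 0$ and $M_r(z)=1-\tfrac12 e^{-rz}$ for $z\ge 0$. When $x$ and $y$ have the same sign, both summands in \eqref{def:OmegaApp} are manifestly non-negative, because $M_\lambda-M_\chi$ has the sign of its argument when $\lambda\ge\chi$, and $(1-M_\chi)M_\chi\ge 0$ always. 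The remaining mixed-sign case $x\le 0\le y$ is the crux: here the cross term $(M_\lambda(x)-M_\chi(x))(M_\lambda(y)-M_\chi(y))$ is negative and must be shown dominated by $(1-M_\chi(y))M_\chi(x)$.

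Substituting the closed forms and setting $p=-x\ge0$, $q=y\ge 0$, a short computation reduces $4\Omega_{\lambda,\chi}\ge0$ to the elementary inequality
\begin{equation*}
e^{-\lambda p-\chi q} + e^{-\chi p-\lambda q} \ge e^{-\lambda p - \lambda q}\,,
\end{equation*}
which holds because $\lambda\ge\chi$ together with $q\ge 0$ already forces $e^{-\lambda p-\chi q}\ge e^{-\lambda p-\lambda q}$, making the second term on the left a free surplus. This last reduction, and the bookkeeping in the mixed-sign region that produces it, is where I expect the main effort to lie; the identity and symmetry are essentially Fubini plus an explicit antiderivative.
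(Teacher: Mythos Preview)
Your proposal is correct and, for the quadratic-form identity, follows the paper's route exactly: reduce to $\chi=1$, write $w(y)-\langle w\rangle_\lambda$ as an integral of $w'$ via Fubini, square, and integrate in $y$. Your compact representation $\Omega_{\lambda,\chi}(x_1,x_2)=\tfrac12\int(M_\lambda(x_1)-\ind_{\{x_1>y\}})(M_\lambda(x_2)-\ind_{\{x_2>y\}})\,e^{-\chi|y|}\,\rd y$ is a tidy repackaging of the paper's intermediate kernel $\tilde\Omega_\lambda$; symmetry and the explicit formula drop out the same way.

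The non-negativity arguments differ slightly. You split on whether $x,y$ share a sign and, in the mixed-sign region $x\le 0\le y$, substitute the closed forms of $M_r$ to reduce to the explicit inequality $e^{-\lambda p-\chi q}+e^{-\chi p-\lambda q}\ge e^{-\lambda(p+q)}$. The paper instead keeps the equivalent non-symmetrized form $M_\lambda(x)M_\lambda(y)-M_\lambda(x)M_\chi(y)+(1-M_\lambda(y))M_\chi(x)$ (for $x\le y$) and splits only on the sign of the larger variable $y$: when $y>0$ it simply drops $(1-M_\lambda(y))M_\chi(x)\ge 0$ and uses $M_\lambda(y)\ge M_\chi(y)$, which already absorbs your mixed-sign case without any exponential bookkeeping; when $y<0$ (forcing $x<0$) it bounds $1-M_\lambda(y)\ge M_\chi(y)-M_\lambda(y)$ and factors the result as $(M_\lambda(x)-M_\chi(x))(M_\lambda(y)-M_\chi(y))\ge 0$. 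Both routes are elementary; the paper's stays entirely at the level of the monotonicity of $r\mapsto M_r(\cdot)$ and never writes exponentials, while yours has the merit of working directly with the symmetric expression in the statement.
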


\begin{proof}
We assume $\chi = 1$ without loss of generality and denote $\Omega_\lambda=\Omega_{\lambda,1}$. We can reformulate the left-hand-side of \eqref{eq:poincare} as follows:
\begin{align*}
 \int_{-\infty}^\infty |w(y) - \langle w \rangle_\lambda|^2 e^{-|y|}\,\rd y &= \int_{-\infty}^\infty \left ( \int_{-\infty}^\infty (w(y) - w(z)) \frac\lambda2 e^{-\lambda |z|} \,\rd z  \right )^2 e^{- |y|}\,\rd y 
\\
&= \int_{-\infty}^\infty \left ( \int_{-\infty}^\infty \left (\int_z^y   w'(x)\,\rd x\right ) \frac\lambda2 e^{-\lambda |z|} \,\rd z  \right )^2 e^{-  |y|}\,\rd y\\
& = \int_{-\infty}^\infty \left ( \int_{x <y}     w'(x) \left ( \int_{z < x} \frac\lambda2 e^{-\lambda |z|} \,\rd z \right ) \,\rd x \right. \\
& \qquad \qquad \left. - \int_{x>y}   w'(x) \left ( \int_{z > x}   \frac\lambda2 e^{-\lambda |z|} \,\rd z \right ) \,\rd x  \right )^2 e^{-  |y|}\,\rd y\,,
\end{align*} 
where we changed the order of integration to obtain the last line.
Denoting 
$$N_\lambda(x) := 1 - M_\lambda(x) = \int_{z > x} \frac\lambda2 e^{-\lambda |z|} \,\rd z$$
and expanding the last square, we find
\begin{align*}
\int_{-\infty}^\infty |w(y) - \langle w \rangle_\lambda|^2 e^{-  |y|}\,\rd y &=   \int_{-\infty}^\infty \left ( \int_{x <y}    w'(x)M_\lambda(x) \,\rd x  - \int_{x>y}   w'(x) N_\lambda(x) \,\rd x  \right )^2 e^{-  |y|}\,\rd y\\
& = \int_{-\infty}^\infty \left(  \iint  w'(x_1)    w'(x_2)  \tilde\Omega_\lambda(y,x_1,x_2)  \,\rd x_1 \,\rd x_2  \right) e^{-  |y|}\,\rd y\,,
\end{align*}
where\begin{multline*}
\tilde\Omega_\lambda(y,x_1,x_2) =  M_\lambda(x_1) M_\lambda(x_2)\ind_{x_1<y}\ind_{x_2<y} - M_\lambda(x_1) N_\lambda(x_2)\ind_{x_1<y}\ind_{x_2>y} \\
- N_\lambda(x_1) M_\lambda(x_2)\ind_{x_1>y}\ind_{x_2<y} + N_\lambda(x_1) N_\lambda(x_2)\ind_{x_1>y}\ind_{x_2>y}\, . 
\end{multline*}
Let us consider the case $x_1<x_2$ (the argument for $x_1>x_2$ is similar), and using the identity $N_\lambda = 1 - M_\lambda$, we can simplify the expression above:
\begin{align*}
\tilde\Omega_\lambda(y,x_1,x_2) &=  M_\lambda(x_1) M_\lambda(x_2)\ind_{x_2<y} - M_\lambda(x_1) (1 -M_\lambda(x_2))\ind_{x_1<y<x_2}\\
&\qquad+ (1 -  M_\lambda(x_1))(1-M_\lambda(x_2))\ind_{x_1>y}\, .
\end{align*}
By exchanging the order of integration, we find
\begin{align*}
\int_{-\infty}^\infty |w(y) - \langle w \rangle_\lambda|^2 e^{-  |y|}\,\rd y 
& =  2 \iint   w'(x_1)    w'(x_2) \Omega_\lambda(x_1,x_2 )   \,\rd x_1 \,\rd x_2   \,,
\end{align*}
where 
\begin{align*}
\Omega_\lambda(x_1,x_2) &= \frac12 \int \tilde\Omega_\lambda(y,x_1,x_2) e^{-  |y|}\,\rd y \\
& = 
 M_\lambda(x_1) M_\lambda(x_2) (1 - M_1(x_2)) - M_\lambda(x_1) (1 -M_\lambda(x_2)) (M_1(x_2) - M_1(x_1)) \\
 &\quad  + (1 -  M_\lambda(x_1))(1-M_\lambda(x_2))M_1(x_1)\\
 & = M_\lambda(x_1) M_\lambda(x_2)  - M_\lambda(x_1) M_1(x_2) + (1 - M_\lambda(x_2))  M_1(x_1) 
\end{align*}
if $x_1<x_2$, and $\Omega_\lambda(x_1,x_2):=\Omega_\lambda(x_2,x_1)$ if $x_1>x_2$. The symmetric expression \eqref{def:OmegaApp} for $\Omega_\lambda$ immediately follows.

Next, we show $\Omega_\lambda \geq 0$. It is enough to show non-negativity for $x<y$ by symmetry. By direct calculation,
$$
\frac{\rd}{\rd \lambda}M_\lambda(x)=\frac{x}{2}e^{-\lambda |x|}\,.
$$
Suppose on the one hand that $y>0$, then $\Omega_\lambda(x,y) \geq  M_\lambda(x) ( M_\lambda(y)  -  M_1(y)  ) \geq 0 $, because $\lambda \mapsto M_\lambda(y)$ is increasing for $y>0$. Suppose on the other hand that $y<0$, hence $x<0$ as well (recall $x<y$ by assumption).
Therefore, 
\begin{align*}
\Omega_\lambda(x,y) &\geq M_\lambda(x) M_\lambda(y)  - M_\lambda(x) M_1(y) + (M_1(y) - M_\lambda(y))  M_1(x) \\
&= \left(  M_\lambda(x) - M_1(x)\right)\left(M_\lambda(y) - M_1(y)\right)\,, 
\end{align*}
which is non-negative because $\lambda \mapsto M_\lambda(x)$ is decreasing for $x<0$. This concludes the proof.
\end{proof}

\section{$L^1$-stability in the case $\alpha=0$}\label{sec:alpha0}

In the case $\alpha=0$ dynamical arguments together with known stability results for shock waves \cite{Serre} provide global $L^1$ stability of solutions. We sketch the argument here for a more general signal response function $\phi:\R\to\R$ with the biologically reasonable assumptions that $\phi$ is odd, bounded, non-increasing and regular enough. 
For $u[\partial_xS]$ denoting the chemotactic flux, we consider the slightly more general bacterial chemotaxis model
\begin{subequations}
\label{quasimodel-app}
\begin{align}
& \partial_t \rho(t,x) =  \partial^2_{x} \rho(t,x) +  \, \partial_x \left(\rho(t,x)  u[\partial_x S]\right)\label{quasimodel1-app} \,,\\
& - \partial^2_{x} S(t,x)   = \rho(t,x)\,  .\label{quasimodel2-app}
\end{align}
\end{subequations} 
Given a symmetric velocity set $V\subset \R$, the macroscopic flux $u$ can be related to the microscopic behavior of individual cells via the signal response function, 
$$
u[\partial_x S](t,x)=\frac{1}{|V|} \int_{v\in V} v\phi(v\partial_x S(t,x))\rd v\,,
$$
an expression that was derived rigorously from a mesoscopic model in \cite{saragosti_mathematical_2010} by means of parabolic scaling techniques. 
Choosing the stiff response function $\phi(x)=-\sign(x)$ yields model \eqref{quasimodel} considered in this work (with $\chi:= \int_{v\in V} |v|\,\rd v/|V|$).

What allows us to handle the case $\alpha=0$ with an alternative method is a reformulation of system \eqref{quasimodel-app} as a viscous scalar conservation law (SCL). Substituting $\rho=- \partial^2_{x} S$ into \eqref{quasimodel1-app}, denoting
$$
z(t,x):= \partial_x S(t,x)\,,\qquad
f(r):=- \frac{1}{|V|}\int_{v\in V} \Phi(v r)\,\rd v
$$
with $\Phi$ the antiderivative of the signal response function $\phi$, and integrating \eqref{quasimodel1-app} once w.r.t. $x$, we obtain the viscous SCL
\begin{equation}\label{SCL}
\partial_t z + \partial_x f(z) = \partial_x^2 z\,.
\end{equation}
The  fundamental solution of \eqref{quasimodel2-app} is given by $K_0(x)=-|x|/2$, and since we fixed the cell mass at $\int \rho\,\rd x = 2/\chi$, 
we obtain the far-field conditions for any $t>0$,
\begin{equation}\label{BC}
z_\pm := \lim_{x\to\pm \infty} z(t,x)= \lim_{x\to\pm \infty} -\frac{1}{2}\int_\R \sign(x-y)\rho(t,y)\,\rd y = \mp \frac{1}{\chi}\,.
\end{equation}
Any stationary state $Z_\infty=Z_\infty(x)$ to \eqref{SCL}-\eqref{BC} satisfies (after another integration in $x$),
\begin{equation}\label{SCL-stat}
Z_\infty'=f(Z_\infty)-f(1/\chi)\,, \qquad \lim_{x\to\pm \infty} Z_\infty=\mp 1/\chi\,.
\end{equation}
where the integration constant is determined by the far-field condition and using the fact that $\Phi$ is even.
Whilst no explicit stationary state can be found, existence follows from an implicit argument.

\begin{proposition}[Existence]
Let $V\subset \R$ be the velocity set (which is a symmetric open interval), and let $\phi:\R\to\R$ be odd, non-increasing and in $C^{-1}(\R)$. Then system \eqref{quasimodel-app} admits at least one stationary state $(\rho_\infty, S_\infty)$.
\end{proposition}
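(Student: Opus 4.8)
The plan is to reduce the existence of a stationary state $(\rho_\infty, S_\infty)$ to the construction of a monotone heteroclinic profile for the scalar autonomous ODE \eqref{SCL-stat}, and then to recover $(\rho_\infty, S_\infty)$ from $Z_\infty$. Writing $g(r) := f(r) - f(1/\chi)$, a stationary state corresponds to a solution of $Z_\infty' = g(Z_\infty)$ with $Z_\infty(-\infty) = +1/\chi$ and $Z_\infty(+\infty) = -1/\chi$ as prescribed by \eqref{BC}; once such a $Z_\infty$ is found, I set $S_\infty' = Z_\infty$ and $\rho_\infty = -S_\infty'' = -Z_\infty' = -g(Z_\infty)$. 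The boundary conditions then force the correct mass, since $\int_\R \rho_\infty\,\rd x = Z_\infty(-\infty) - Z_\infty(+\infty) = 2/\chi$, while monotonicity of $Z_\infty$ together with $g\le 0$ in the interior gives $\rho_\infty \ge 0$.

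First I would record the structural properties of $f$ inherited from $\phi$. Since $\phi$ is odd, its antiderivative $\Phi$ can be chosen even, and since $\phi$ is non-increasing, $\Phi$ is concave; hence $r\mapsto \Phi(vr)$ is concave for each fixed $v$, and averaging over the symmetric velocity set $V$ shows that $f$ is even and convex, with minimum at $r=0$ and nondecreasing on $[0,\infty)$. In particular $g(\pm 1/\chi) = 0$ by evenness of $f$, so these are exactly the two equilibria demanded by the far-field conditions, and $g(r) \le 0$ for $|r| \le 1/\chi$. Computing $g'(1/\chi) = f'(1/\chi) = -\tfrac{1}{|V|}\int_V v\,\phi(v/\chi)\,\rd v \ge 0$, where the integrand is pointwise $\le 0$ because $\phi$ is odd and non-increasing; this is the quantity that must be made strictly positive to ensure hyperbolicity of the endpoints.

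With these facts, existence follows by the elementary quadrature (the "implicit argument"): on the open interval $(-1/\chi, 1/\chi)$, where $g<0$, I define $Z_\infty$ implicitly as the inverse of the map $Z \mapsto x_0 + \int_{Z_0}^{Z} \frac{\rd r}{g(r)}$ for an arbitrary interior base point $Z_0$. This produces a strictly decreasing solution, and the heteroclinic (asymptotic, rather than finite-$x$) character of the connection is equivalent to the divergence of $\int^{1/\chi} \frac{\rd r}{|g(r)|}$ and $\int_{-1/\chi} \frac{\rd r}{|g(r)|}$, which holds because $g$ vanishes at the hyperbolic endpoints $\pm 1/\chi$ (using $g'(1/\chi)=f'(1/\chi)>0$ and $g'(-1/\chi)=-f'(1/\chi)<0$ from the oddness of $f'$). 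Varying $x_0$ yields the translation family of profiles, consistent with the claim of "at least one", after which the recovery of $(\rho_\infty,S_\infty)$ and the positivity and mass checks are routine.

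I expect the main obstacle to be precisely the step of guaranteeing $g<0$ strictly on the whole open interval and the non-degeneracy $f'(\pm 1/\chi) \neq 0$ under the mere assumption that $\phi$ is non-increasing rather than strictly decreasing. If $f$ were flat up to $r = 1/\chi$, then $g$ would vanish on a subinterval, producing a continuum of interior equilibria that blocks both the connection and the quadrature; and if $f'(1/\chi) = 0$ the endpoints would become non-hyperbolic and the trajectory could reach them in finite $x$, destroying the far-field conditions. Both pathologies are excluded by the stiff response $\phi=-\sign$, which gives $f(r) = \chi|r|$ and $f'(1/\chi)=\chi>0$, and more generally by any genuine decrease of $\phi$ on $V$; I would read the "regular enough" hypothesis as supplying exactly the mild strict monotonicity and smoothness needed to close this step.
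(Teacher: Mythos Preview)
Your approach is essentially the same as the paper's: both reduce to the scalar autonomous ODE $Z_\infty' = g(Z_\infty)$ with $g(r)=f(r)-f(1/\chi)$, use evenness and convexity of $f$ to see that $g\le 0$ on $[-1/\chi,1/\chi]$ with zeros exactly at the endpoints, and conclude by a phase-line argument. The paper reaches the sign conclusion via the mean value theorem over $V$ and then speaks of attractor/repeller and phase portraits, whereas you write down the explicit quadrature; these are the same construction.

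One point to correct: your worry that non-hyperbolic endpoints ($f'(1/\chi)=0$) could let the trajectory reach $\pm 1/\chi$ in finite $x$ has the direction of the danger reversed. Since $g$ is $C^1$ with $g(1/\chi)=0$, one always has $|g(r)|\le C\,|r-1/\chi|$ near the endpoint, so $\int^{1/\chi}\frac{\rd r}{|g(r)|}$ diverges regardless of whether $g'(1/\chi)$ vanishes; higher-order vanishing of $g$ only strengthens the divergence and slows the approach to the endpoint. Finite-$x$ reaching would require \emph{sublinear} vanishing of $g$, which $C^1$ regularity excludes. The only genuine obstruction is the one you correctly flag separately, namely strict negativity of $g$ on the open interval $(-1/\chi,1/\chi)$; this fails precisely when $\phi$ vanishes identically on the range $V\cdot(-1/\chi,1/\chi)$. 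The paper's proof likewise leaves this nondegeneracy implicit.
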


\begin{proof}
Fix $x\in \R$. Applying the mean value theorem to the continuous function $g:\bar V\to \R$ defined as $g(v):=\Phi(vZ_\infty(x))-\Phi(v/\chi)$, there exists a velocity $v^*\in V$ such that \eqref{SCL-stat} can be reformulated as 
\begin{equation*}
Z_\infty'=F(Z_\infty):=\Phi(v^*/\chi)-\Phi(v^* Z_\infty)\,.
\end{equation*}
W.l.o.g. set $\Phi(0)=0$, and so $\Phi\le 0$ and $v^*\neq 0$.
$F$ is a convex function of $Z_\infty$ (since $\phi$ is assumed non-increasing) with the two zeros $z_+$ and $z_-$, and it follows that the above system has two equilibria, $z _+=-1/\chi$ being an attractor, and $z_-=1/\chi$ a repeller. For $-1/\chi < Z_\infty < 1/\chi$, the derivative $Z_\infty'$ is negative and so has the desired behavior ensuring that the far-field condition is satisfied. As the two stationary points lie on the same trajectory in phase space, a qualitative analysis of the phase portrait provides existence of a solution $Z_\infty$ satisfying \eqref{SCL-stat}.
\end{proof}

Using the reformulation as a viscous SCL above, we can show asymptotic stability with respect to the $L^1$-distance.

\begin{theorem}[$L^1$ stability]\label{thm:SCL}
Let $\phi\in C^1(\R)$.
Given a stationary state $Z_\infty\in L^1(\R)$ satisfying \eqref{SCL-stat}, consider an initial datum $z_0\in L^\infty(\R) \cap \left(L^1(\R)+Z_\infty\right)$ satisfying $z_+\le z_0(x) \le z_-$ for a.e. $x\in \R$. Then the solution $z(t,x)$ to \eqref{SCL}-\eqref{BC} satisfies
\begin{equation*}
\| z(t,\cdot)-Z_\infty(\cdot - h)\|_1 \to 0 \quad \text{ as } \quad t \to \infty
\end{equation*}
for the shift
$$
h:= \frac{\chi}{2}\int \left(z_0-Z_\infty\right)\,\rd x\,.
$$
\end{theorem}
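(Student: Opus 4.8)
The plan is to recognise $Z_\infty$ as a \emph{stationary viscous shock profile} for the scalar conservation law \eqref{SCL} and then invoke the classical $L^1$-asymptotic-stability theory for such profiles (Il'in--Oleinik; Osher--Ralston; cf. \cite{Serre}). First I would record the structural facts that make this theory applicable. Since $\phi\in C^1$ we have $f\in C^2$; since $V$ is symmetric and $\Phi$ is even, $f$ is even, so the Rankine--Hugoniot speed $(f(z_-)-f(z_+))/(z_- - z_+)$ vanishes and the profile is genuinely stationary. Since $\phi$ is non-increasing, $f$ is convex, so with $z_- = 1/\chi > z_+ = -1/\chi$ the Lax/Oleinik entropy condition $f'(z_-) > 0 > f'(z_+)$ holds: the shock is compressive. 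Finally, the phase-portrait analysis already used for existence shows $Z_\infty$ is monotone decreasing from $z_-$ to $z_+$ and approaches the two states exponentially; all later estimates must rely on these qualitative facts, as no explicit profile is available.

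Next I would pin down the shift $h$ by mass conservation. Because $z$ and any translate $Z_\infty(\cdot - a)$ share the same far-field states $z_\pm$, their difference lies in $L^1$ and its integral is conserved along \eqref{SCL}. A Fubini computation on $Z_\infty(x) - Z_\infty(x-h) = \int_{x-h}^x Z_\infty'(s)\,\rd s$ gives
$$
\int_\R \left(Z_\infty(x) - Z_\infty(x-h)\right)\rd x = h\,(z_+ - z_-) = -\frac{2h}{\chi}\,,
$$
so the perturbation $z_0 - Z_\infty(\cdot - h)$ has vanishing mean precisely when $h = \frac{\chi}{2}\int(z_0 - Z_\infty)\,\rd x$, which is the stated value. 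Zero mean is the necessary normalisation if one hopes to converge to a single translate.

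For the convergence itself I would combine two ingredients. First, \eqref{SCL} generates an order-preserving, $L^1$-contractive semigroup on $L^\infty\cap(L^1+Z_\infty)$: comparison with constants keeps $z(t,\cdot)\in[z_+,z_-]$, and taking the stationary solution $Z_\infty(\cdot-h)$ as comparison partner shows $t\mapsto\|z(t)-Z_\infty(\cdot-h)\|_1$ is non-increasing, hence bounded and convergent, while parabolic smoothing yields uniform interior $C^1$/BV bounds and relative compactness of the orbit. Second, to force the limit to be $0$ rather than merely bounded, I would pass to the integrated perturbation $y(t,x) = \int_{-\infty}^x\left(z - Z_\infty(\cdot - h)\right)\rd s$, which vanishes at $\pm\infty$ thanks to the zero-mean normalisation and solves the linear convection-diffusion equation $\partial_t y + a\,\partial_x y = \partial_x^2 y$ with $a(t,x) = \int_0^1 f'\!\left(Z_\infty(x-h) + \tau\,\partial_x y\right)\rd\tau$. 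Running the Il'in--Oleinik weighted energy estimate (weight built from $-Z_\infty'$), whose coercivity is guaranteed exactly by the convexity of $f$ together with the entropy condition, gives $\|y(t,\cdot)\|_{L^2}\to 0$; combined with the uniform higher regularity this upgrades to $\|\partial_x y(t,\cdot)\|_{L^1} = \|z(t,\cdot) - Z_\infty(\cdot - h)\|_1 \to 0$, which is the claim.

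The main obstacle is the step that turns boundedness into decay, i.e.\ establishing the coercive weighted energy inequality for $y$ (or, alternatively, the Osher--Ralston ordering argument via the monotone family $\{Z_\infty(\cdot-a)\}_a$) for the \emph{implicitly defined} profile and for data merely in $L^\infty\cap(L^1+Z_\infty)$. Closely related is the control of the $L^1$ tails uniformly in time (tightness), needed to promote local to global convergence; here one exploits the exponential approach of $Z_\infty$ to $z_\pm$ and the contraction property. This is precisely where the entropy condition is indispensable and where adapting the framework of \cite{Serre} to the present stiff-response setting requires the most care.
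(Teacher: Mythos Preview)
Your structural setup is correct and in places more detailed than the paper's own proof: the identification of $Z_\infty$ as a compressive viscous shock ($f\in C^2$, even, convex; Lax condition; zero Rankine--Hugoniot speed) and the computation pinning down the shift $h$ via conservation of $\int(z-Z_\infty(\cdot-h))$ are both accurate, and the paper does not spell out the latter.

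Where you diverge is in the mechanism for upgrading $L^1$-contraction to actual decay. You propose the Il'in--Oleinik route: pass to the antiderivative $y$ of the perturbation and run a weighted $L^2$ energy estimate, relying on convexity of $f$ for coercivity. The paper instead follows the Osher--Ralston strategy as presented in \cite[Chapter~7, \S3.2]{Serre}: first prove convergence for \emph{sandwiched} data $Z_\infty(\cdot-j)\le z_0\le Z_\infty(\cdot-k)$ by a pure dynamical-systems argument (order preservation, compactness of the orbit, LaSalle's invariance principle on the $\omega$-limit set), and then show by explicit truncation that the sandwiched data are $L^1$-dense in the full admissible class $\{z_+\le z_0\le z_-\}\cap(L^1+Z_\infty)$; $L^1$-contraction then transports convergence from the dense subclass to all data.

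The two approaches trade off as follows. Your weighted-energy route is quantitative and would in principle give a rate, but it requires $y_0\in L^2$, which is \emph{not} implied by the stated hypothesis $z_0-Z_\infty\in L^1$; you correctly flag this as the main obstacle but do not say how to close it. The paper's Osher--Ralston-plus-density argument is softer---no rate---but it works directly under the $L^1$ hypothesis without any integrability assumption on the antiderivative, and the density step is an elementary two-line estimate. If you want your route to cover the full theorem as stated, you would still need either a density/approximation step of the same flavour, or a separate argument that parabolic smoothing eventually places $y(t_0,\cdot)$ in $L^2$ for some $t_0>0$.
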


This result yields more than just stability in the sense that a solution $z(t,x)$ initially close to the stationary state $Z_\infty$ remains close for all times. In fact, Theorem~\ref{thm:SCL} holds for any initial perturbation, providing global $L^1$-stability.

The proof of Theorem~\ref{SCL} makes use of the linear $\mathcal{C}^0$-semigroup $T_t:L^\infty(\R)\to L^\infty(\R)$ corresponding to the Cauchy problem for \eqref{SCL}, sending an initial datum $z_0\in L^\infty(\R)$ to its corresponding solution $z(t,x)$ with $z(0,x)=z_0(x)$. If $f$ in \eqref{SCL} is at least $C^2$, the semigroup $(T_t)_{t\ge 0}$ enjoys the four \emph{Co-Properties}: comparison ($a\le b$ a.e. implies $T_ta\le T_tb$ a.e), contraction ($\|T_ta-T_tb\|_1\le \|a-b\|_1$), conservation ($\int T_t a\,\rd x = \int a\,\rd x$) and constants (if $a$ is constant, then $T_ta=a$). These properties follow from the fact that the flux term $\partial_x f(z)$ in \eqref{SCL} may be interpreated as a lower order perturbation of the heat equation $\partial_t z = \partial_x^2 z$, see \cite{Serre}. The contraction property immediately yields the decay
$$
\frac{d}{dt} \|T_t z_0 - Z_\infty(\cdot -h)\|_1 \le 0
$$
and it remains to show that the limit is indeed zero as claimed.

\begin{proof}
The key approach is to first focus on the case where the initial datum lies between two shifts of the profile $Z_\infty$ as this case can be treated by means of dynamical systems theory using compactness of trajectories, $\omega$-limits and Lasalle's invariance principle (Theorem 3 in \cite[Chapter 7, Section 3.2]{Serre}, which relies on Lemma 4 in the same section). The general idea for this result is due to Osher and Ralston, see \cite{OsherRalston}. It then remains to show that the set of initial conditions considered in Theorem~\ref{thm:SCL} is included in the $L^1$-closure of the set of $L^\infty$ functions sandwiched between two arbitrary shifts of $Z_\infty$.  Indeed, for $z_0\in L^1 + Z_\infty$ such that $z_+\le z_0\le z_-$, the integral $\int_y^\infty |z_0(x)-Z_\infty(x)|\,\rd x$ vanishes as $y\to\infty$, and similarly for the distance towards $-\infty$. So for any $\eps>0$ there exist $s,t\in\R$ such that 
$$
\int_s^{+\infty}  |z_0(x)-Z_\infty(x)|\,\rd x <\frac{\eps}{3}\,,\qquad
\int_{-\infty}^t  |z_0(x)-Z_\infty(x)|\,\rd x <\frac{\eps}{3}\,.$$
For $\eta>0$ small enough such that $\eta<\eps/(3\chi |s-t|^{-1})$, we define
$\tilde z (x) $ equal to $(1-\chi \eta)z_0(x)$ on $(t,s)$ and equal to $Z_\infty$ elsewhere. Then
$$
\int  |\tilde z(x)-z_0|\,\rd x 
= \left( \int_s^{+\infty}+ \int_{-\infty}^t  \right)   |z_0(x)-Z_\infty(x)|\,\rd x + \chi \eta \int_t^s \,\rd x < \eps\,.
$$
And for big enough $k\in \R$ and small enough $j\in \R$, we have
$$
Z_\infty(x-j)\le \tilde z(x)\le Z_\infty(x-k)\qquad \text{ for a.e. } x\in\R\,.
$$
This concludes the proof. See also Corollary 1 in \cite[Chapter 7, Section 3.2]{Serre}.
\end{proof}

\end{document}